\DeclareMathOperator*{\supess}{supess}
\newtheorem{theorem}{Theorem}
\newtheorem{corollary}[theorem]{Corollary}
\begin{document}
\date {\today}
\title{Comparison of Some Bounds on Norms of Functions of a Matrix or Operator
\thanks{
\textbf{Funding:} This material is based on work supported by the National Science Foundation under Grant No. DGE-2140004. Any opinions, findings, and conclusions or recommendations expressed in this material are those of the authors and do not necessarily reflect the views of the National Science Foundation.}}

\author {Anne Greenbaum\footnote{University of Washington,
Applied Math Dept., Box 353925, Seattle, WA 98195.  email:  greenbau@uw.edu},
Natalie Wellen\footnote{University of Washington, 
Applied Math Dept., Box 353925, Seattle, WA 98195.  email:  nwellen@uw.edu}
}


\maketitle

\begin{abstract} 
We use results in [M.~Crouzeix and A.~Greenbaum, {\em Spectral sets:  
numerical range and beyond}, SIAM Jour.~Matrix Anal.~Appl., 40 (2019), 
pp.~1087-1101] to derive upper bounds on the norm of a function $f$ of a matrix
or operator $A$ based on the infinity-norm of $f$ on various regions
in the complex plane.  We compare these results to those that can be 
derived from a straightforward application of the Cauchy integral formula
by replacing the norm of the integral by the integral of the resolvent norm.
While, in some cases, the new upper bounds on $\| f(A) \|$ are {\em much}
tighter than those from the Cauchy integral formula, we show that in many
cases of interest, the two bounds are of the same order of magnitude, with
that from the Cauchy integral formula actually being slightly smaller.
We give a partial explanation of this in terms of the numerical range of the resolvent
at points near an ill-conditioned eigenvalue.
\end{abstract}

\paragraph{2000 Mathematical subject classifications\,:}47A25 ; 47A30

\noindent{\bf Keywords\,:}{ numerical range, spectral set}

\section{Introduction}
Let $A$ be an $n$ by $n$ matrix or a bounded linear operator on a complex 
Hilbert space $(H, \langle \cdot , \cdot \rangle , \| \cdot \|)$.
A closed set $\Omega \subset \mathbb{C}$ is a $K$-spectral set for $A$ 
if the spectrum of $A$ is contained in $\Omega$ and if, for all rational 
functions $f$ bounded in $\Omega$, the following inequality holds:
\begin{equation}
\| f(A) \| \leq K \| f \|_{\Omega} , \label{Kspectral}
\end{equation}
where $\| \cdot \|$ on the left denotes the norm in $H$ and 
$\| \cdot \|_{\Omega}$ on the right denotes the $\infty$-norm on $\Omega$.  
It was shown in \cite{CP} that the closure of the numerical range, 
\begin{equation}
W(A) := \{ \langle Aq,q \rangle : q \in H,~\| q \| = 1 \} ,
\label{numericalrange}
\end{equation}
is a $(1 + \sqrt{2})$-spectral set for $A$.  This was extended in \cite{CG} 
to show that other regions in the complex plane are $K$-spectral sets.  
In particular, it was shown that the numerical
range with a circular hole or cutout is a $(3 + 2 \sqrt{3})$-spectral set.

In this paper, we use theorems proved in \cite{CG} to derive values
of $K$ for which (\ref{Kspectral}) holds for other regions $\Omega$.
A simple way to find such a $K$ value for a given region $\Omega$
containing the spectrum of $A$ in its interior is to use the Cauchy integral formula, 
replacing the norm of the integral by the integral of the resolvent norm: 
\[
f(A) = \frac{1}{2 \pi i} \int_{\partial \Omega} ( \zeta I - A )^{-1}
f( \zeta )\,d \zeta \Rightarrow
\| f(A) \| \leq \frac{1}{2 \pi} \left( \int_{\partial \Omega} 
\| ( \zeta I - A )^{-1} \|~| d \zeta | \right) \| f \|_{\Omega} .
\]
Thus one can always take
\begin{equation}
K = \frac{1}{2 \pi} \int_{\partial \Omega} \| ( \zeta I - A )^{-1} \|~
| d \zeta | . \label{KCauchy}
\end{equation}
The main goal of \cite{CG} was to produce $K$ values that are independent of $A$
for certain regions $\Omega$ (that do depend on $A$),
but it was also hoped that the values derived there would be 
smaller than those in (\ref{KCauchy}).
We will compare these $K$ values for various sets $\Omega$.  For some sets, we will also 
compare these values
to what we believe to be the optimal $K$ value.  This is computed numerically 
using an optimization code and, at least, provides a {\em lower bound} on $K$.

One way to calculate $K$ is to take $\frac{1}{2 \pi}$ times a boundary integral of the resolvent norm.
The main theorem in \cite{CG} (Theorem \ref{thm:main} in this paper), however, relates
the value of $K$ not to $\frac{1}{2 \pi}$ times a boundary integral of the resolvent norm but to 
a boundary integral of $\frac{1}{\pi}$ times the absolute value of the minimum point 
in the spectrum of the Hermitian part of a certain unit scalar times the resolvent.  
This integrand is equivalent to $\frac{1}{\pi}$ times the infimum of the real part
of the numerical range of this unit scalar times the resolvent.  If the absolute
value of this infimum turns out to be much less than the {\em numerical radius}
(the supremum of the absolute values of points in the numerical range of the resolvent,
which is between $\frac{1}{2}$ and $1$ times the norm of the resolvent), 
then Theorem \ref{thm:main} may give a much smaller $K$ value than that in (\ref{KCauchy});
on the other hand, if the absolute value of this infimum turns out to be almost
equal to the numerical radius of the resolvent, then the two $K$ values may be close, with
formula (\ref{KCauchy}) actually producing a somewhat smaller value.
We show that this latter situation holds in a number of cases of interest and we give 
a partial explanation as to why.  This observation was already hinted at in 
\cite{CGL}, where it was demonstrated numerically that the minimum point in the spectrum of
the Hermitian part of this scalar times the resolvent $( \zeta I - A )^{-1}$
tends to decrease rapidly as $\zeta$ moves to curves farther and farther inside $W(A)$.

The organization of this paper is as follows.  
In section \ref{previous results} we establish notation and review results from \cite{CG}.  
In section \ref{extensions} we extend these results and show how they
can be applied to an arbitrary region containing the spectrum of $A$ to determine a value of $K$ 
for which the region is a $K$-spectral set.
In section \ref{relation} we explain the relationship between the $K$ values
in Theorem \ref{thm:main} and those in (\ref{KCauchy}), and
in section \ref{applications} we apply the extended results to a variety of problems.
We consider block diagonal matrices and show how the numerical range can be divided into
disjoint components that constitute a $K$-spectral set for the matrix.  
We also consider relevant $K$-spectral sets for describing the behavior of continuous and
discrete time dynamical systems.  
In section \ref{comparisons} we give concluding remarks.

\section{Results from \cite{CG}} \label{previous results}

\subsection{Notation}
Let $f$ be a rational function bounded in a closed set $\Omega$ containing the spectrum of $A$.  Assume that the boundary $\partial \Omega$ is
rectifiable and has a finite number of connected components.  From the Cauchy integral formula, 
we can write
\[
f(z) = \frac{1}{2 \pi i} \int_{\partial \Omega} \frac{f( \zeta )}{\zeta - z}\,d\zeta ,~~
f(A) = \frac{1}{2 \pi i} \int_{\partial \Omega} ( \zeta I - A )^{-1} f( \zeta )\,d\zeta .
\]
Letting $s$ denote arc length, going in a counter-clockwise direction along $\partial \Omega$,
and letting $\partial \omega \subset \mathbb{R}$ denote the values of $s$ as $\zeta (s)$ traverses 
$\partial \Omega$, the above equations can
be written in the form
\[
f(z) = \frac{1}{2 \pi i} \int_{\partial \omega} \frac{f( \zeta (s) )}{\zeta (s) - z} \zeta' (s)\,ds ,~~
f(A) = \frac{1}{2 \pi i} \int_{\partial \omega} ( \zeta (s) I - A )^{-1} f( \zeta (s) ) \zeta' (s)\,ds .
\]
We will also use the Cauchy transform of the complex conjugate $\bar{f}$:
\[
g(z) := C( \overline{f},z) := \frac{1}{2 \pi i} \int_{\partial \omega} \frac{\overline{f( \zeta (s))}}{\zeta (s) - z} \zeta' (s)\,ds ,~~
g(A) := \frac{1}{2 \pi i} \int_{\partial \omega} ( \zeta (s) I - A )^{-1} \overline{f( \zeta (s))} \zeta' (s)\,ds .
\]
Finally we define the transform of $f$ by the double layer potential kernel,
\begin{equation}
\mu ( \zeta (s),z ) := \frac{1}{\pi} \frac{d}{ds} ( \arg ( \zeta (s) - z ) ) = 
\frac{1}{2 \pi i} \left( \frac{ \zeta' (s)}{\zeta (s) - z} - 
\frac{ \overline{\zeta' (s)}}{\overline{\zeta (s)} - \bar{z}} \right) ,
\label{mu_defn}
\end{equation}
\begin{equation}
\mu ( \zeta (s),A ) = \frac{1}{2 \pi i} \left( ( \zeta (s) I - A )^{-1} \zeta' (s) - 
( \overline{\zeta (s)} I - A^{*} )^{-1} \overline{\zeta' (s)} \right) .
\label{muA_def}
\end{equation}
With these definitions, we can write
\[
S(f,z) := f(z) + \overline{g(z)} = \int_{\partial \omega} f( \zeta (s)) \mu ( \zeta (s),z)\,ds ,
\]
\[
S(f,A) := f(A) + g(A )^{*} = \int_{\partial \omega} f( \zeta (s)) \mu ( \zeta (s),A)\,ds .
\]
Further, note that $S(1,A) = 2I$ since
\[
\int_{\partial \omega} \mu ( \zeta (s),A)\,ds = \frac{1}{2 \pi i} \int_{\partial \omega} 
( \zeta (s) I - A )^{-1} \zeta' (s)\,ds + \left( \frac{1}{2 \pi i}
\int_{\partial \omega} ( \zeta (s) I - A )^{-1} \zeta' (s)\,ds \right)^{*} = I + I^{*} = 2I .
\]

\subsection{Main Results from \cite{CG}}
Define
\[
c_1 := \sup \{ \max_{z \in \Omega } | C( \bar{f}, z ) | : f \mbox{ a rational function},
\| f \|_{\Omega} \leq 1 \} .
\]
It is shown in \cite[Lemma 1]{CG} that $c_1$ satisfies
\begin{equation}
c_1 \leq \supess_{\zeta_0 \in \partial \Omega} \int_{\partial \omega} | \mu ( \zeta (s), \zeta_0 ) |\,ds .
\label{c1_bound}
\end{equation}
Define 
\begin{equation}
c_2 := \frac{1}{2} \sup \{ \| S(f,A) \| : f \mbox{ a rational function}, 
\| f \|_{\Omega} \leq 1 \} . \label{c2_def}
\end{equation}

Following is (a part of) the main theorem of \cite[Theorem 2]{CG}:

\begin{theorem}
\label{thm:main}
With $c_1$ and $c_2$ as defined above, $\Omega$ is a $K$-spectral set for $A$, where
\[
K = c_2 + \sqrt{ c_2^2 + c_1 } .
\]
\end{theorem}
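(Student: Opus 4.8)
The plan is to reduce the statement to a single scalar quadratic inequality whose positive root is exactly $K$. Fix a rational $f$ with $\|f\|_\Omega\le 1$ and write $g := C(\overline f,\cdot)$, so that by the definitions above one has $f(A) = S(f,A) - g(A)^*$ and $f(A)^* = S(f,A)^* - g(A)$. The goal is to show that for every unit vector $x$,
\begin{equation}
\|f(A)x\|^2 \;\le\; 2c_2\,\|f(A)x\| + c_1 . \label{eq:quad}
\end{equation}
Granting \eqref{eq:quad}, set $\tau = \|f(A)x\|$; then $\tau^2 - 2c_2\tau - c_1\le 0$ forces $\tau \le c_2 + \sqrt{c_2^2+c_1}$, and taking the supremum over unit $x$ gives $\|f(A)\|\le K\,\|f\|_\Omega$ after undoing the normalization. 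Thus the whole theorem rests on establishing \eqref{eq:quad}, and the appearance of the square root is explained purely by this quadratic.

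To derive \eqref{eq:quad} I would substitute $f(A)^* = S(f,A)^* - g(A)$ into only the left factor of $f(A)^*f(A)$, using that rational functions of $A$ multiply, $g(A)f(A) = (gf)(A)$:
\[
\|f(A)x\|^2 = \langle f(A)^*f(A)x,x\rangle = \langle f(A)x,\,S(f,A)x\rangle - \langle (gf)(A)x,x\rangle .
\]
Taking real parts and using Cauchy--Schwarz together with $\|S(f,A)\|\le 2c_2$ (the definition (\ref{c2_def}) of $c_2$) bounds the first term by $2c_2\,\|f(A)x\|$, so
\[
\|f(A)x\|^2 \;\le\; 2c_2\,\|f(A)x\| - \Re\langle (gf)(A)x,x\rangle .
\]
Hence \eqref{eq:quad} follows once I prove the one-sided estimate
\begin{equation}
-\,\Re\langle (gf)(A)x,x\rangle \;\le\; c_1\,\|f\|_\Omega^2 \qquad (\|x\|=1). \label{eq:crux}
\end{equation}

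The estimate \eqref{eq:crux} is where I expect the real difficulty to lie. The natural route is to expand $\langle (gf)(A)x,x\rangle = \langle g(A)w,x\rangle$ with $w = f(A)x$ through the resolvent representation of $g(A)$, obtaining a boundary integral $\tfrac{1}{2\pi i}\int_{\partial\Omega} \langle(\zeta I-A)^{-1}w,x\rangle\,\overline{f(\zeta)}\,d\zeta$. Using the resolvent identity $(\zeta I-A)^{-1}f(A) = f(\zeta)(\zeta I-A)^{-1} + q_\zeta(A)$, where $q_\zeta(z) = (f(z)-f(\zeta))/(\zeta-z)$ is analytic in $z$, one can try to recognize the resulting expression as a pairing against the Cauchy transform $C(\overline f,\cdot)$, whose modulus on $\Omega$ is controlled by $c_1$ through its definition and the double-layer bound (\ref{c1_bound}). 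The crucial subtlety is that a naive triangle-inequality or eigenexpansion bound fails: the spectral projections $P_j$ of $A$ need not be orthogonal, so $\sum_j|\langle P_j x,x\rangle|$ is uncontrolled, and one must instead exploit the sign/positivity structure of the double-layer kernel $\mu(\zeta(s),z)$ to land the clean one-sided bound by $c_1$ rather than a much larger constant. This same sign structure is what yields $c_1\le 1$ for convex $\Omega$, recovering the $1+\sqrt2$ constant in that case.

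Finally, assembling the pieces: substituting \eqref{eq:crux} into the displayed inequality gives the quadratic \eqref{eq:quad} for $\|f\|_\Omega\le 1$, the positive-root bound yields $\|f(A)\|\le c_2+\sqrt{c_2^2+c_1}$, and since $f$ and $x$ were arbitrary this is exactly the assertion that $\Omega$ is a $K$-spectral set with $K = c_2+\sqrt{c_2^2+c_1}$. The only genuinely hard step is \eqref{eq:crux}; everything else is bookkeeping around the identity $f(A)=S(f,A)-g(A)^*$ and the definitions of $c_1$ and $c_2$.
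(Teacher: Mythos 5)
First, a point of reference: this paper does not prove Theorem~\ref{thm:main} at all --- it is imported verbatim from \cite[Theorem 2]{CG} --- so the relevant comparison is with the proof given there, which follows Crouzeix--Palencia \cite{CP} as streamlined by Ransford and Schwenninger. Your reduction is the correct skeleton of that argument: writing $f(A)^*f(A)=S(f,A)^*f(A)-(gf)(A)$, taking $\langle\cdot\,x,x\rangle$, and invoking $\|S(f,A)\|\le 2c_2$ does yield
$\|f(A)x\|^2\le 2c_2\|f(A)x\|-\Re\langle (gf)(A)x,x\rangle$, and the quadratic with constant term $c_1$ is indeed the only way to land on $K=c_2+\sqrt{c_2^2+c_1}$. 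That part is sound bookkeeping.

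The gap is your inequality \eqref{eq:crux}, and it is not a routine verification that you have merely deferred --- it is the entire theorem. The only information you have put into play about $gf$ is the sup-norm bound $\|gf\|_\Omega\le c_1\|f\|_\Omega^2$, and a sup-norm bound on a function $h$ does \emph{not} control $\Re\langle h(A)x,x\rangle$ by $\|h\|_\Omega$: that would be the assertion that $\Omega$ is a spectral set for $A$ in the numerical-radius sense \emph{with constant $1$}, which is strictly stronger than the statement being proved. Using only $\|gf\|_\Omega\le c_1$ legitimately gives $\|(gf)(A)\|\le Kc_1$ with $K$ the very constant being bounded, and then the quadratic degrades to $K^2\le 2c_2K+c_1K$, i.e.\ the weaker linear bound $K\le 2c_2+c_1$ (which is $3$ rather than $1+\sqrt2$ in the convex case). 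So as written the argument is circular precisely at the step that matters. The published proofs escape this by using that $g$ is not an arbitrary function of sup-norm $\le c_1$ but specifically the Cauchy transform $C(\bar f,\cdot)$, so that $fg$ is tied to the nonnegative boundary data $|f|^2$ and to the (signed) positivity structure of the double-layer map $\phi\mapsto\int\phi\,\mu\,ds$, combined with an extremality/compactness argument over the admissible $f$; your closing sentence gestures at this ``sign/positivity structure'' but does not carry it out, and in the generality of Theorem~\ref{thm:main} the kernel $\mu(\zeta(s),A)$ is not even PSD, which is exactly why the constant $c_2$ has to be defined the way it is. Until \eqref{eq:crux} is actually established with the constant $c_1$ free of $K$, the proposal proves only $K\le 2c_2+c_1$, not the stated bound.
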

 
One can use (\ref{c1_bound}) and definition (\ref{mu_defn}) to bound $c_1$ in the theorem.
If we fix $\zeta_0 \in \partial \Omega$ and let $\zeta(s)$ move around 
a curve $\Gamma_j$ that is all or part of $\partial \Omega$ then,
from the definition in (\ref{mu_defn}), $\int_{s: \zeta (s) \in \Gamma_j} | \mu ( \zeta (s),
\zeta_0 ) |\,ds$ is equal to $\frac{1}{\pi}$ times the total variation in the argument of $\zeta (s) - \zeta_0$.
For example, if $\partial \Omega$ is a circle or the boundary of a convex set such as in Figure \ref{fig:regions}(a),
then the argument of $\zeta (s) - \zeta_0$ changes by $\pi$ as
$\zeta (s)$ traverses the curve $\partial\Omega$ so that $\int_{\partial \omega} | \mu ( \zeta (s), \zeta_0 ) |\,ds = 1$.  If $\zeta_0$ lies inside a circle
or the boundary curve of a convex set such as in Figure \ref{fig:regions}(b), then the integral of $| \mu ( \zeta (s), \zeta_0 ) |$ over that piece of 
the boundary is $2$.
If $\zeta_0$ lies outside a circle of radius $r$ such as in Figure \ref{fig:regions}(c), then, if $R$
is the distance from $\zeta_0$ to the center of the circle, the argument of $\zeta (s) - \zeta_0$
goes from its initial value, say, $0$ to $\arcsin (r/R)$ to $0$, to $- \arcsin (r/R)$, and back to $0$,
for a total change of $4 \arcsin (r/R)$.  Note that for any region $\Omega$, the upper bound (\ref{c1_bound}) on $c_1$ can be computed numerically, by testing many points $\zeta_0 \in \partial \Omega$
and finding the one that leads to the largest total variation in the 
argument of $\zeta (s) - \zeta_0$, as $\zeta (s)$ traverses $\partial \Omega$.

\begin{figure}[ht]
\centerline{\epsfig{file=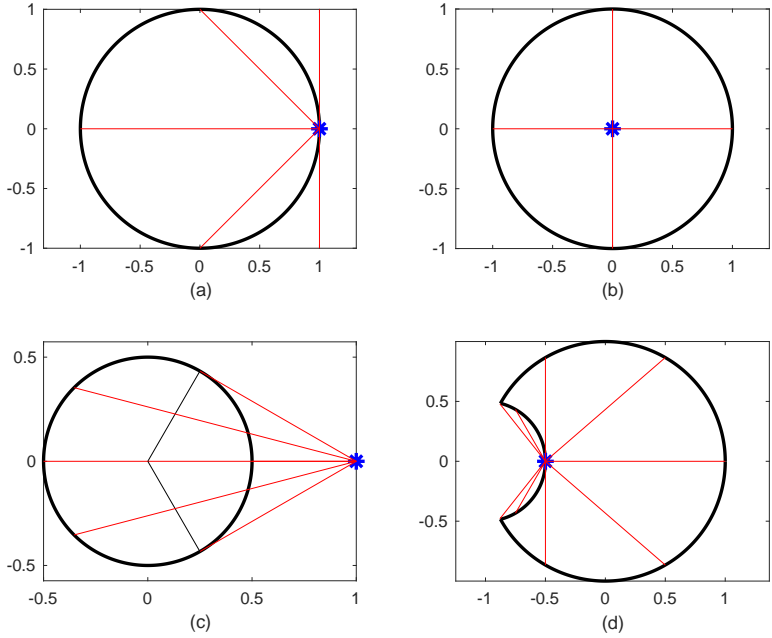,width=4in}}
\caption{Various boundary configurations.  The blue asterisk represents $\zeta_0$,
and the red lines show how the angle of the vector $\zeta (s) - \zeta_0$ changes
as $\zeta (s)$ traverses the boundary curve.}
\label{fig:regions}
\end{figure}
\medskip

To obtain upper bounds on $c_2$, we first note that if $\mu ( \zeta (s) , A )$ is positive
semidefinite (PSD) for $s \in [ s_{min} , s_{max} ]$, then
\begin{equation}
\left\| \int_{s_{min}}^{s_{max}} f( \zeta (s) ) \mu ( \zeta (s), A )\,ds \right\| \leq
\max_{s \in [s_{min} , s_{max}]} | f( \zeta (s)) |~\left\| \int_{s_{min}}^{s_{max}} 
\mu ( \zeta (s), A )\,ds \right\| .
\label{PSDresult}
\end{equation}
A proof can be obtained by noting that
\[
\left\| \int_{s_{min}}^{s_{max}} f( \zeta (s)) \mu ( \zeta (s), A )\,ds \right\| =
\sup_{\| x \| = \| y \| = 1} \left| \int_{s_{min}}^{s_{max}} f( \zeta (s) )
\left\langle \mu ( \zeta (s) ,A )y, x \right\rangle \,ds \right| ,
\]
and following the arguments in \cite[Lemma 2.3]{CP}. 
Thus if $\mu ( \zeta , A )$ is PSD for all $\zeta \in \partial \Omega$, then
$c_2 \leq 1$, since for any rational function $f$ with $\| f \|_{\Omega} \leq 1$,
\[
\| S(f,A) \|  \leq \left\| \int_{\partial \omega} \mu ( \zeta (s), A )\,ds \right\| = 
\| 2I \| = 2 ,
\]
and from definition (\ref{c2_def}), $c_2$ is bounded by half this value.
For $\Omega$ a convex set containing $W(A)$, Theorem \ref{thm:main} yields the 
Crouzeix-Palencia result \cite{CP} that $\Omega$ is a $(1 + \sqrt{2} )$-spectral set 
for $A$, since in this case $c_1 \leq 1$ and $c_2 \leq 1$.

When $\mu ( \zeta (s) , A )$ is not PSD, we will add a multiple of the identity to
$\mu ( \zeta (s) , A)$ to obtain a PSD operator.  For this, we need bounds on the minimum
value in the spectrum of $\mu ( \zeta (s) , A )$:
\begin{equation}
\lambda_{min}( \mu ( \zeta (s), A)) := \min \{ \lambda : \lambda \in 
\mbox{Sp} ( \mu ( \zeta (s) , A ) ) \} .
\label{lambdamin_def}
\end{equation}
Let $\zeta_0 = \zeta ( s_0 )$ denote a point on $\partial \Omega$ where the unit 
tangent $\zeta_0' := \left. \frac{d \zeta}{ds} \right|_{s_0}$ exists.  
Since $\mu ( \zeta (s),A)$ depends on $\zeta' (s)$, when we fix a point $\zeta_0$,
we will write $\mu ( \zeta_0 , \zeta_0' , A)$ to make this dependence clear.
Note that the half-plane $\Pi_0 :=
\{ z \in \mathbb{C} : \mbox{Im}( \zeta_0' (\overline{\zeta_0} - \bar{z})) \geq 0 \}$ 
has the same outward normal as $\Omega$ at $\zeta_0$.
The following theorem is from \cite[Lemmas 5, 7, and 8]{CG}.
For a disk about a point $\xi$ of radius $r$, the
assumption $\zeta_0 - \xi = i r \zeta_0'$ in the theorem means that 
$\partial \Omega$ and the boundary of the disk are tangent at $\zeta_0$ and 
the outward normal to $\Omega$, $\zeta_0' / i$, is the same as the inward normal to the disk.

\begin{theorem}
\label{thm:lambdamin}
If $W(A) \subset \Pi_0$,
then $\lambda_{min} ( \mu ( \zeta_0 , \zeta_0' , A )) \geq 0$,
with equality if $\zeta_0 \in \partial W(A)$.
If, for some $\xi \in \mathbb{C} \backslash \mbox{Sp} (A)$, 
$\zeta_0 - \xi = i r_1 \zeta_0'$, where $r_1 \leq 1/ \| (A - \xi I )^{-1} \|$,
then $\lambda_{min} ( \mu ( \zeta_0 ,  \zeta_0' , A )) \geq - \frac{1}{2 \pi r_1}$.
If $\zeta_0 - \xi = i r_2 \zeta_0'$, where $r_2 \leq 1/w((A - \xi I )^{-1} )$, 
then $\lambda_{min} ( \mu ( \zeta_0 , \zeta_0',  A ) ) \geq - \frac{1}{\pi r_2}$.
\end{theorem}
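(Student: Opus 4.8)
The plan is to reduce all three assertions to one statement about the real scalar $\Re\!\left(n\langle Bq,q\rangle\right)$, where $n:=\zeta_0'/i$ is the unit outward normal and $B:=(\zeta_0 I-A)^{-1}$ is the resolvent. First I would record that, by (\ref{muA_def}) together with $\zeta_0'=in$, the operator $\mu(\zeta_0,\zeta_0',A)$ equals $\tfrac{1}{2\pi}\left(nB+\bar n B^{*}\right)$, i.e.\ $\tfrac1\pi$ times the Hermitian part of $nB$; in particular it is self-adjoint, so $\lambda_{min}$ is well defined, and $\langle\mu q,q\rangle=\tfrac1\pi\Re\!\left(n\langle Bq,q\rangle\right)$. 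Hence $\lambda_{min}(\mu(\zeta_0,\zeta_0',A))=\tfrac1\pi\inf_{\|q\|=1}\Re\!\left(n\langle Bq,q\rangle\right)$, and the entire theorem amounts to lower-bounding this infimum under the three hypotheses.

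For the first assertion I would substitute $w=Bq$, so that $q=(\zeta_0 I-A)w$ and $\langle Bq,q\rangle=\langle w,(\zeta_0 I-A)w\rangle=\overline{(\zeta_0-a)}\,\|w\|^2$, where $a:=\langle Aw,w\rangle/\|w\|^2\in W(A)$. Using $\Re(n\bar z)=\Re(\bar n z)$ this gives $\Re\!\left(n\langle Bq,q\rangle\right)=\Re\!\left(\bar n(\zeta_0-a)\right)\|w\|^2$. Rewriting the defining condition of $\Pi_0$ as $\Pi_0=\{z:\Re(\bar n(\zeta_0-z))\ge0\}$, the hypothesis $W(A)\subset\Pi_0$ says precisely that $\Re(\bar n(\zeta_0-a))\ge0$ for every $a\in W(A)$, so the infimum is $\ge0$. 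If in addition $\zeta_0\in\partial W(A)$, I would pick a unit vector $w$ with $\langle Aw,w\rangle=\zeta_0$ (available since the numerical range is convex and, for matrices, closed; an approximating sequence handles the operator case) and set $q=(\zeta_0 I-A)w$; then $a=\zeta_0$ forces $\Re\!\left(n\langle Bq,q\rangle\right)=0$, whence $\lambda_{min}=0$.

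For the two disk assertions the key is a single algebraic identity. Writing $R:=A-\xi I$, the tangency condition $\zeta_0-\xi=ir\zeta_0'$ gives $\zeta_0-\xi=-rn$, hence $\zeta_0 I-A=-(R+rnI)$ and $B=-(R+rnI)^{-1}$. I would set $u:=(R+rnI)^{-1}q=-Bq$, so that $q=Ru+rnu$, and then expand both $\langle Bq,q\rangle$ and $\|q\|^2=\|Ru+rnu\|^2$. Collecting the common cross term $t:=\Re(n\langle u,Ru\rangle)$ and using $|n|=1$, one obtains $\Re\!\left(n\langle Bq,q\rangle\right)=-t-r\|u\|^2$ together with $\|q\|^2=\|Ru\|^2+r^2\|u\|^2+2rt$; eliminating $t$ between these two relations yields the master identity
\[
\Re\!\left(n\langle Bq,q\rangle\right)=\frac{\|Ru\|^2-r^2\|u\|^2-\|q\|^2}{2r}.
\]
This is the one computation that requires care; everything else is feeding in the two hypotheses.

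With $r=r_1\le 1/\|R^{-1}\|$, the norm bound gives $r_1\|u\|=r_1\|R^{-1}(Ru)\|\le\|Ru\|$, so $\|Ru\|^2-r_1^2\|u\|^2\ge0$ and the identity yields $\Re\!\left(n\langle Bq,q\rangle\right)\ge-\|q\|^2/(2r_1)$, i.e.\ $\lambda_{min}\ge-1/(2\pi r_1)$. With $r=r_2\le 1/w(R^{-1})$ I cannot discard $-r_2^2\|u\|^2$, so instead I would use the numerical-radius bound $|\langle u,Ru\rangle|=|\langle R^{-1}(Ru),Ru\rangle|\le\|Ru\|^2/r_2$, which gives $t\ge-\|Ru\|^2/r_2$; inserting this into $\|q\|^2=\|Ru\|^2+r_2^2\|u\|^2+2r_2 t$ produces $r_2^2\|u\|^2\le\|q\|^2+\|Ru\|^2$, and the identity then yields $\Re\!\left(n\langle Bq,q\rangle\right)\ge-\|q\|^2/r_2$, i.e.\ $\lambda_{min}\ge-1/(\pi r_2)$. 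The main obstacle is the bookkeeping behind the master identity, keeping the conjugations and the factor $r$ straight while verifying the cross-term elimination; once it is in hand, the conceptual point is that the norm hypothesis controls the combination $\|Ru\|^2-r^2\|u\|^2$ directly, whereas the numerical-radius hypothesis instead controls the cross term $t$, and each is exactly what its bound requires.
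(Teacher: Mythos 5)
Your proposal is correct: the reduction of $\lambda_{min}(\mu(\zeta_0,\zeta_0',A))$ to $\frac1\pi\inf_{\|q\|=1}\Re\bigl(n\langle Bq,q\rangle\bigr)$, the substitution $q=(\zeta_0 I-A)w$ for the half-plane case, and the master identity with the two ways of exploiting the hypotheses (norm bound controlling $\|Ru\|^2-r^2\|u\|^2$, numerical-radius bound controlling the cross term $t$) all check out, including the signs and the factor of $2$ distinguishing the two disk bounds. The paper itself does not prove this theorem --- it imports it from Lemmas 5, 7, and 8 of the cited Crouzeix--Greenbaum paper --- and your argument is essentially a correct reconstruction of those proofs, differing only in organization (you eliminate the cross term to get a single identity, whereas the source verifies positive semidefiniteness of the shifted operator directly).
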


Note that the interior of the disks 
$\{ z \in \mathbb{C} : | z - \xi | < 1 / \| ( A - \xi I )^{-1} \| \}$
and $\{ z \in \mathbb{C} : | z - \xi | < 1/ w( (A - \xi I )^{-1} ) \}$ 
alluded to in the theorem contain no points in the spectrum of $A$ since 
$\| (A - \xi I )^{-1} \| \geq w( ( A - \xi I )^{-1} ) \geq | ( \lambda - \xi )^{-1} |$
for all $\lambda \in \mbox{Sp} (A)$; that is, the inverses of 
these quantities, which are the radii of the disks, are less than or equal to 
$| \lambda - \xi |$.

Theorems \ref{thm:main} and \ref{thm:lambdamin} can be used together
to obtain $K$ values for certain types of sets, such as the numerical 
range with a circular hole or cutout.  In the next subsection, we include
such an example from \cite{CG}.  In the following section we extend this
example in several ways and also indicate how Theorem \ref{thm:main} can
be used directly to determine a $K$ value for any set $\Omega$ containing
the spectrum of $A$.

\subsection{Example from \cite{CG}}\label{example}
Using these results, it is shown in \cite{CG} that if $\Omega = \Omega_0 \backslash
{\cal D} ( \xi , r )$, where $\Omega_0$ is a convex domain containing
$\mbox{cl}(W(A))$ (where $\mbox{cl}( \cdot )$ denotes the closure)
and ${\cal D} ( \xi , r )$ is the disk about a point
$\xi \in \mathbb{C} \backslash \mbox{Sp}(A)$ of radius $r$, where $r \leq 
1/w( (A - \xi I )^{-1})$, then $\Omega$ is a 
$(3 + 2 \sqrt{3} )$-spectral set for $A$.  This assumes that either
$\partial {\cal D} ( \xi , r ) \subset \Omega_0$ or the number of intersection 
points of $\partial \Omega_0$ and $\partial {\cal D}( \xi , r )$ is finite.

To bound $c_1$ in this case, suppose first that $\partial {\cal D} ( \xi , r ) 
\subset \Omega_0$.  
If $\zeta_0 \in \partial \Omega_0$, then as $\zeta (s)$ traverses
$\partial \Omega_0$, the argument of $\zeta (s) - \zeta_0$ changes by $\pi$,
as illustrated in Figure \ref{fig:regions}(a).  As
$\zeta (s)$ traverses $\partial {\cal D} ( \xi , r )$, the argument of $\zeta (s) -
\zeta_0$ changes by $4 \arcsin ( r/ | \zeta_0 - \xi | ) < 2 \pi$,
as illustrated in Figure \ref{fig:regions}(c).
Thus, in this case,
\[
\int_{\{ s : \zeta (s) \in \partial \Omega_0 \}} | \mu ( \zeta (s), \zeta_0 ) |\,ds = 1 ,~~
\int_{\{ s: \zeta (s) \in \partial {\cal D} ( \xi , r ) \}} 
| \mu ( \zeta (s), \zeta_0 ) |\,ds < 2 .
\]
[To simplify notation, throughout the rest of the paper we will write simply $\int_{\partial \Omega_j} \ldots\,ds$ 
in place of $\int_{\{ s: \zeta (s) \in \partial \Omega_j \}} \ldots\,ds$.]
Now suppose $\zeta_0 \in \partial {\cal D} ( \xi , r )$.  Then as $\zeta (s)$ traverses
$\partial \Omega_0$, the argument of $\zeta (s) - \zeta_0$ changes by $2 \pi$,
as illustrated in Figure \ref{fig:regions}(b), while
as $\zeta (s)$ traverses $\partial {\cal D}( \xi , r )$, the argument of $\zeta (s) -
\zeta_0$ changes by $\pi$, as illustrated in Figure
\ref{fig:regions}(a). Thus, in this case, we have
\[
\int_{\partial \Omega_0} | \mu ( \zeta (s), \zeta_0 ) |\,ds = 2 ,~~
\int_{\partial {\cal D} ( \xi , r )} | \mu ( \zeta (s), \zeta_0 ) |\,ds = 1 .
\]
It follows that for $\zeta_0$ anywhere on the boundary of $\Omega$, the
change in argument of $\zeta (s) - \zeta_0$ as $\zeta (s)$ traverses
$\partial \Omega$ is at most $3 \pi$; that is, $c_1 \leq 3$.
If, instead, the disk ${\cal D} ( \xi , r )$ intersects $\partial \Omega_0$
as in Figure \ref{fig:regions}(d), then it is clear that the total variation
in the argument of $\zeta (s) - \zeta_0$ as $\zeta(s)$ traverses $\partial \Omega$
is smaller and thus $c_1$ is again bounded by $3$.

To bound $c_2$, let $\Gamma_0 = \partial \Omega_0 \backslash 
\mbox{cl}( {\cal D} ( \xi , r ))$ and let 
$\Gamma_1 = \partial {\cal D} ( \xi , r ) \cap \mbox{cl}( \Omega_0 )$,
so that $\partial \Omega = \Gamma_0 \cup \Gamma_1$. 
Let $f$ be a function with $\| f \|_{\Omega} \leq 1$ and write 
$S(f,A) = S_0 + S_1 + S_2$, where
\[
S_0 = \int_{\Gamma_0} f( \zeta (s)) \mu ( \zeta (s),A)\,ds,~~
S_1 = \int_{\Gamma_1} f( \zeta (s)) \left( \mu ( \zeta (s),A ) + \frac{1}{\pi r} I
\right)\,ds ,~~
S_2 = - \frac{1}{\pi r} \int_{\Gamma_1} f( \zeta (s) ) I\,ds .
\]
It follows from Theorem \ref{thm:lambdamin} that for $\zeta \in \partial \Omega_0$,
$\mu ( \zeta , A )$ is PSD.  Since adding PSD operators to a PSD 
operator does not decrease the norm, we can extend the integral over $\Gamma_0$
to an integral over the entire boundary $\partial \Omega_0$ to obtain:
\[
\| S_0 \| \leq \left\| \int_{\partial \Omega_0} \mu ( \zeta (s),A)\,ds
\right\| = \| 2I \| = 2 .
\]
If $\zeta \in \partial {\cal D} ( \xi , r)$,
since $r \leq 1/w((A- \xi I )^{-1})$, Theorem \ref{thm:lambdamin} shows that 
$\mu ( \zeta ,A) + \frac{1}{\pi r} I$ is PSD, and hence
\[
\| S_1 \| \leq \left\| \int_{\Gamma_1} \left( \mu ( \zeta (s),A) +
\frac{1}{\pi r} I \right) \,ds \right\| \leq
\left\| \int_{\partial {\cal D} ( \xi , r )} \left( \mu ( \zeta (s),A) + 
\frac{1}{\pi r} I \right)\,ds \right\| 
= \frac{1}{\pi r} \int_{\partial {\cal D} ( \xi , r )} ds = 2. 
\]
Here we have used the fact that the spectrum of $A$ lies outside ${\cal D} ( \xi , r)$
and hence $\int_{\partial {\cal D} ( \xi , r )} \mu ( \zeta (s),A)\,ds = 0$.  It is clear
that $\| S_2 \| \leq 2$, since the length of $\Gamma_1$ is less than or equal to
the length of $\partial {\cal D} ( \xi , r )$, which is 
$2 \pi r$.  Thus $\| S(f,A) \| \leq 6$ and $c_2 \leq 3$.
Applying Theorem \ref{thm:main} with $c_1 = c_2 = 3$, yields the result from \cite{CG} 
that $\Omega$ is a $(3 + 2 \sqrt{3})$-spectral set for $A$.  

%

\section{Some Extensions} \label{extensions}
The arguments in section \ref{example} can be extended in some simple ways.

Suppose, for example, that $\Omega = \Omega_0 \backslash {\cal D} ( \xi , r )$
where $\Omega_0$ and ${\cal D} ( \xi , r )$ are as in section \ref{example},
but where the intersection of $\Omega_0$ and ${\cal D}( \xi , r )$ is at most
a half-disk, as pictured in Figure \ref{fig:regions}(d).
The greatest variation in the argument of $\zeta_0 - \zeta (s)$ can be attained when
$\zeta_0$ is in the position of the asterisk in the figure.
Then the total variation of the argument of
$\zeta (s) - \zeta_0$ could change by as much as $\pi / 2$ as 
$\zeta (s)$ traverses $\Gamma_1$.  It changes by the same amount as 
$\zeta (s)$ moves along $\Gamma_0$ to the point where the argument of
$\zeta (s) - \zeta_0$ matches $\zeta_0'$ or $- \zeta_0'$,
with a change of $\pi$ in between. 
The total change could therefore be as large as $2 \pi$.
It follows that in this case, for any $\zeta_0$ on $\partial \Omega$,
\[
\int_{\partial \Omega_0} | \mu ( \zeta (s), \zeta_0 ) |\,ds \leq 2 ,
\]
and therefore $c_1 \leq 2$ when at most a half-disk is removed from $\Omega_0$.
Using the same definitions of $S_0$, $S_1$, and $S_2$ as in section \ref{example},
we now observe that the length of $\Gamma_1$ is at most $\pi r$ instead of $2 \pi r$, 
so that $\| S_2 \| \leq 1$, leading to the estimate $\| S(f,A) \| \leq 5$ and 
$c_2 \leq 5/2$.  Using these values of $c_1$ and $c_2$ in Theorem \ref{thm:main}
leads to the result that $\Omega$ is a $( 2.5 + \sqrt{8.25} )$-spectral set for $A$.

If the radius $r$ of the disk removed from $\Omega_0$ satisfies
$r \leq 1/ \| ( A - \xi I )^{-1} \|$,
then from Theorem \ref{thm:lambdamin}, it follows that 
$\lambda_{min} ( \mu ( \zeta_0 , A )) \geq - \frac{1}{2 \pi r}$.
In this case, we can replace $S(f,A) = S_0 + S_1 + S_2$ by 
$S(f,A) = S_0 + \tilde{S}_1 + \tilde{S}_2$, where
\[
\tilde{S}_1 = \int_{\Gamma_1} f( \zeta (s)) \left( \mu ( \zeta (s),A )  + 
\frac{1}{2 \pi r} I \right)\,ds ,~~
\tilde{S}_2 = - \frac{1}{2 \pi r} \int_{\Gamma_1} f( \zeta (s)) I\,ds .
\]
Now
\[
\| \tilde{S}_1 \| \leq \left\| \int_{\Gamma_1} \left( \mu ( \zeta(s),A ) + 
\frac{1}{2 \pi r} I \right)\,ds \right\| \leq
\frac{1}{2 \pi r} \int_{\partial {\cal D} ( \xi , r)} ds = 1 ,
\]
and $\| \tilde{S}_2 \| \leq 1$.  With $c_1 = 3$ and $c_2 = 2$,
it follows from Theorem \ref{thm:main} that $\Omega$ is a $(2 + \sqrt{7})$-spectral set,
and if the intersection of $\Omega_0$ and ${\cal D} ( \xi , r )$ is at most
a half-disk, then with $c_1 = 2$, and $\| \tilde{S}_2 \| \leq 1/2$, 
we can take $c_2 = 7/4$, and then it follows from Theorem \ref{thm:main} that
this is a $4$-spectral set for $A$.

\subsection{Removing More Disks}  \label{disks}
The techniques of section \ref{example} can be used to bound $K$ when multiple disks
are removed from $\Omega_0 \supset \mbox{cl}(W(A))$.   
%

\begin{corollary}
\label{cor:mdisks}
Suppose $\Omega_0 \supset \mbox{cl}(W(A))$ and $\Omega$ is obtained from $\Omega_0$
by removing $m$ disks centered at points $\xi_1 , \ldots , \xi_m$, with the radius $r_j$
of disk $j$ equal to either $1/ \| (A - \xi_j I )^{-1} \|$ or $1/ w(( A - \xi_j I )^{-1} )$.
Set $p_j = 1$ if $r_j = 1/ \| (A - \xi_j I )^{-1} \|$ and $p_j = 2$ if
$r_j = 1/w(( A - \xi_j I )^{-1} )$.  Then $\Omega$ is a $K$-spectral set for $A$ with
\begin{equation}
K \leq \left( 1 + \sum_{j=1}^m p_j \right) + \sqrt{ \left( 1 + \sum_{j=1}^m p_j \right)^2 + 2m + 1} .
\label{mdisks}
\end{equation}
\end{corollary}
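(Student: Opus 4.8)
The plan is to invoke Theorem \ref{thm:main}, so the whole task reduces to producing the two ingredient bounds $c_1 \le 2m+1$ and $c_2 \le 1 + \sum_{j=1}^m p_j$: substituting these into $K = c_2 + \sqrt{c_2^2 + c_1}$ reproduces (\ref{mdisks}) verbatim. Throughout I follow the single-disk computation of section \ref{example}, but now tracking the contribution of each removed disk $\mathcal{D}(\xi_j,r_j)$ separately, and I assume the removed disks are disjoint, each either lying in the interior of $\Omega_0$ or meeting $\partial\Omega_0$ in finitely many points.

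First I would bound $c_1$ through (\ref{c1_bound}), that is, by maximizing over $\zeta_0 \in \partial\Omega$ the quantity $\frac{1}{\pi}$ times the total variation of $\arg(\zeta(s)-\zeta_0)$ as $\zeta(s)$ traverses $\partial\Omega$, splitting into cases on where $\zeta_0$ lies. If $\zeta_0 \in \partial\Omega_0$, the outer boundary contributes an argument change of $\pi$ (value $1$, Figure \ref{fig:regions}(a)) and each disk, viewed from outside, contributes $4\arcsin(r_j/|\zeta_0-\xi_j|) < 2\pi$ (value $<2$, Figure \ref{fig:regions}(c)), for a total below $1+2m$. If instead $\zeta_0$ lies on the boundary of one disk, say disk $k$, then $\zeta_0$ is interior to $\Omega_0$, so $\partial\Omega_0$ contributes $2\pi$ (value $2$, Figure \ref{fig:regions}(b)), disk $k$ contributes $\pi$ (value $1$, Figure \ref{fig:regions}(a)), and each of the remaining $m-1$ disks contributes $<2$, giving a total below $2 + 1 + 2(m-1) = 2m+1$. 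Hence $c_1 \le 2m+1$; partial removal as in Figure \ref{fig:regions}(d) only decreases the total variation, so the bound persists.

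Next I would bound $c_2$ by decomposing, for any $f$ with $\|f\|_\Omega \le 1$, the operator $S(f,A) = S_0 + \sum_{j=1}^m \left( S_1^{(j)} + S_2^{(j)} \right)$, where $S_0$ integrates $f\mu$ over the outer part of $\partial\Omega$, and on the part $\Gamma_j$ of disk $j$'s boundary that bounds $\Omega$ we set $S_1^{(j)} = \int_{\Gamma_j} f\left(\mu + \frac{p_j}{2\pi r_j} I\right)ds$ and $S_2^{(j)} = -\frac{p_j}{2\pi r_j}\int_{\Gamma_j} f\, I\, ds$; here the shift equals $\frac{1}{2\pi r_j}$ when $p_j=1$ and $\frac{1}{\pi r_j}$ when $p_j=2$, matching the two estimates of Theorem \ref{thm:lambdamin}. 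By the first part of that theorem $\mu$ is PSD on $\partial\Omega_0$, so by (\ref{PSDresult}) and the fact that adding PSD operators does not decrease the norm, $\|S_0\| \le \left\|\int_{\partial\Omega_0}\mu\,ds\right\| = \|2I\| = 2$. By the second and third parts, $\mu + \frac{p_j}{2\pi r_j}I$ is PSD on all of $\partial\mathcal{D}(\xi_j,r_j)$; extending each integral to the full circle and using $\int_{\partial\mathcal{D}(\xi_j,r_j)}\mu\,ds = 0$ (the spectrum lies outside the disk) yields $\|S_1^{(j)}\| \le \frac{p_j}{2\pi r_j}\cdot 2\pi r_j = p_j$, while $\|S_2^{(j)}\| \le p_j$ since the length of $\Gamma_j$ is at most $2\pi r_j$. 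Summing gives $\|S(f,A)\| \le 2 + 2\sum_{j=1}^m p_j$, so $c_2 \le 1 + \sum_{j=1}^m p_j$ by (\ref{c2_def}).

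The main obstacle I anticipate is bookkeeping in the $c_2$ decomposition rather than any single hard estimate: one must check that the shift $\frac{p_j}{2\pi r_j}$ is exactly the constant for which Theorem \ref{thm:lambdamin} guarantees positivity on the entire circle $\partial\mathcal{D}(\xi_j,r_j)$, not merely on the sub-arc $\Gamma_j$, so that extending each disk integral to its full circle is legitimate, and that the cancellation $\int_{\partial\mathcal{D}(\xi_j,r_j)}\mu\,ds = 0$ together with the normalization $\int_{\partial\Omega_0}\mu\,ds = 2I$ hold with the correct orientation (the latter being the $f\equiv 1$ case of the Cauchy formula, valid because $\Omega_0 \supset \mbox{cl}(W(A)) \supset \mbox{Sp}(A)$). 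A secondary point is the disjointness/finite-intersection hypothesis on the disks, which underlies both the case analysis for $c_1$ and the clean splitting of $\partial\Omega$ into the outer piece and the arcs $\Gamma_j$; should disks overlap, the same bounds ought to follow since the relevant total variations and the arc lengths of each $\Gamma_j$ only shrink, but that situation would merit a brief separate remark.
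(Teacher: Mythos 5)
Your proposal is correct and follows essentially the same route as the paper's proof: the identical case analysis on the location of $\zeta_0$ to get $c_1 \le 2m+1$, and the identical decomposition of $S(f,A)$ into an outer term plus shifted and compensating terms for each disk with shift $\frac{p_j}{2\pi r_j}$, yielding $c_2 \le 1 + \sum_j p_j$. The caveats you flag (full-circle PSD extension, $\int_{\partial\mathcal{D}_j}\mu\,ds = 0$, and the overlapping/partially-intersecting case only shrinking the bounds) are exactly the points the paper also addresses.
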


\begin{proof}
Consider first the simplest case, where the disks ${\cal D}_1 ( \xi_1 , r_1 ), \ldots ,
{\cal D}_m ( \xi_m , r_m )$ do not overlap and lie entirely inside $\Omega_0$.
For $\zeta_0 \in \partial \Omega_0$, the total variation in $\arg ( \zeta (s) - \zeta_0 )$
becomes
\[
\pi + 4 \sum_{j=1}^{m} \arcsin \left( \frac{1}{r_j | \zeta_0 - \xi_j |} \right)
\leq \pi + 2 m \pi .
\]
If $\zeta_0$ lies on $\partial {\cal D}_i$, then the change in $\arg ( \zeta (s) -
\zeta_0 )$ is $2 \pi$ as $\zeta (s)$ traverses $\partial \Omega_0$ and $\pi$
as $\zeta (s)$ traverses $\partial {\cal D}_i$.  The total change is
\[
3 \pi + 4 \sum_{\stackrel{j=1}{j \neq i}}^{m} \arcsin \left( \frac{1}{r_j | \zeta_0
- \xi_j |} \right) \leq 3 \pi + 2(m-1) \pi .
\]
In either case, the total variation of $\arg ( \zeta (s) - \zeta_0 )$ is at most
$(2m+1) \pi$, so that $c_1 \leq 2m+1$.

To bound $c_2$, write $S(f,A) = S_0 + \sum_{j=1}^m S_j + \sum_{j=1}^m S_{m+j}$, where
\[
S_0 = \int_{\partial \Omega_0} f( \zeta (s)) \mu ( \zeta(s), A)\,ds ,~~
S_j = \int_{\partial {\cal D}_j} f( \zeta (s)) \left( \mu ( \zeta (s),A) ) + 
\frac{p_j}{ 2 \pi r_j} I \right)\,ds ,
\]
\[
S_{m+j} = - \frac{p_j}{ 2 \pi r_j} \int_{\partial {\cal D}_j} f( \zeta (s)) I\,ds ,~~ 
j=1, \ldots , m .
\]
Then
\[
\| S_0 \| \leq 2 ,~~ \| S_j \| \leq p_j ,~~\| S_{m+j} \| \leq p_j ,~~
j=1, \ldots , m . 
\]
It follows that 
\[
\| S(f,A) \| \leq 2 + 2 \sum_{j=1}^m p_j ,
\]
and $c_2 \leq 1 + \sum_{j=1}^m p_j$.  Applying Theorem \ref{thm:main} with $c_1 = 2m+1$ and 
$c_2 = 1 + \sum_{j=1}^m p_j$, we arrive at (\ref{mdisks}).  This upper bound holds for
other configurations as well, where $c_1$ and/or $c_2$ may be smaller because disks overlap
or only partially intersect with $\Omega_0$.
\end{proof}

Note that when the disks in Corollary \ref{cor:mdisks} overlap or only partially intersect
with $\Omega_0$, better bounds on $K$ may be attainable by considering each geometry individually.

\subsection{Other $K$-Spectral Sets} \label{OtherKSpectral}
In the previous subsection, we made use of Theorem \ref{thm:lambdamin} to derive
values of $K$ that are independent of the operator $A$ for special types of regions $\Omega$
(that {\em do} depend on $A$).  For a given operator $A$ and region $\Omega$ containing
the spectrum of $A$,
one can use Theorem \ref{thm:main} directly to derive $K$ values (that depend on both $A$ and $\Omega$),
but in most cases, these values will have to be computed numerically.
A bound on the parameter $c_1$ depends only on the geometry of $\Omega$,
while $c_2$ can be bounded using computed values of $\lambda_{min} ( \mu ( \zeta (s) , A))$.

Examples of regions $\Omega$ that might be of interest include the intersection
of $W(A)$ with the left half-plane, when the spectrum of $A$ lies in the
left half-plane but $W(A)$ extends into the right half-plane, or the intersection
of $W(A)$ with the unit disk, when the spectrum of $A$ lies inside the unit disk.
In the first case, if it can be shown that the intersection of $W(A)$ with
the left half-plane is a $K$-spectral set for $A$, then $K$ is an upper bound
on the amount by which the norm of the solution to the continuous time dynamical system
$y' (t) = A y(t)$, $t > 0$, can grow over its initial value before eventually
decaying to $0$.  In the second case, if it can be shown that the intersection
of $W(A)$ with the unit disk is a $K$-spectral set for $A$, then $K$ is an
upper bound on the amount by which the norms of powers of $A$, $\| A^j \|$, $j=0,1, \ldots$
can grow.

In either of these cases, the set $\Omega = W(A) \cap \mbox{(left half-plane)}$ or
$\Omega = W(A) \cap \mbox{(unit disk)}$ is convex, so $c_1 = 1$.  To bound $c_2$,
let $\Gamma_0$ denote the part of $\partial W(A)$ that is retained as part of 
$\partial \Omega$ and let $\Gamma_1$
denote the line segment or circular arc resulting from the intersection of $W(A)$
with the imaginary axis or the unit circle.  
Then $\partial \Omega = \Gamma_0 \cup \Gamma_1$.
For $f \in {\cal A} ( \Omega )$ with $\| f \|_{\Omega} \leq 1$, define
\[
S_0 = \int_{\Gamma_0} f( \zeta (s)) \mu ( \zeta (s), A)\,ds ,~~
S_1 = \int_{\Gamma_1} f( \zeta (s)) ( \mu ( \zeta (s),A) + \gamma (s) I )\,ds ,~~
S_2 = - \int_{\Gamma_1} f( \zeta (s)) \gamma (s) I\,ds ,
\]
where $\gamma (s) \geq - \lambda_{min} ( \mu ( \zeta (s), A))$.
Proceeding as in section \ref{example}, since $\mu ( \zeta (s) , A )$ is PSD
for $\zeta (s) \in \partial W(A)$, we can write
\[
\| S_0 \| \leq \left\| \int_{\Gamma_0} \mu ( \zeta (s),A)\,ds \right\|
\leq \left\| \int_{\partial W(A)} \mu ( \zeta (s),A )\,ds \right\| =
\| 2I \| = 2 .
\]
Similarly, since $\mu ( \zeta (s),A) + \gamma (s) I$ is PSD
on $\Gamma_1$ and $\mu ( \zeta (s),A)$ is PSD
on $\partial W(A)$, if we let $\Gamma_2$ denote the part of
$\partial W(A)$ that was discarded and define $\gamma (s)$ to be $0$
on $\Gamma_2$, then we have
\[
\| S_1 \| \leq \left\| \int_{\Gamma_1} ( \mu ( \zeta (s),A) + \gamma (s) I )\,ds \right\| \leq
\left\| \int_{\Gamma_1 \cup \Gamma_2} ( \mu ( \zeta (s),A) + \gamma (s) I )\,ds
\right\| = \left| \int_{\Gamma_1 \cup \Gamma_2} \gamma (s) \right| = \int_{\Gamma_1} | \gamma (s) |\,ds .
\]
Finally, we can write
\[
\| S_2 \| \leq \int_{\Gamma_1} | \gamma (s) |\,ds .
\]
Since $S(f,A) = S_0 + S_1 + S_2$, it follows that $\| S(f,A) \| \leq 2 + 2 \int_{\Gamma_1} | \gamma (s) |\,ds$
and therefore
\begin{equation}
c_2 \leq 1 + \int_{\Gamma_1} | \gamma(s) |\,ds . \label{c2formula}
\end{equation}

In general, suppose a set $\Omega$ consists of $m$ disjoint, simply connected 
regions $\Omega_1 , \ldots , \Omega_m$ with boundaries $\Gamma_1 , \ldots , \Gamma_m$.  
An example might be the $\epsilon$-pseudospectrum of $A$:
\[
\Lambda_{\epsilon} (A) := \{ z \in \mathbb{C} : \| (zI-A )^{-1} \| > \epsilon^{-1} \}
\]
For this set, the value (\ref{KCauchy}) is easy to compute:
\[
K = \frac{{\cal L}( \partial \Lambda_{\epsilon} )}{2 \pi \epsilon} ,
\]
where ${\cal L} ( \cdot )$ denotes the length of the curve.  In this case, it may
be difficult to come up with
an analytic expression for the bound (\ref{c1_bound}) on $c_1$.  This bound can be
estimated numerically (to any desired accuracy), however, by first discretizing $\partial
\Lambda_{\epsilon} (A)$, then considering each discretization point as a possible value
for $\zeta_0$ in (\ref{c1_bound}), determining the total variation of the argument
of $\zeta (s) - \zeta_0$ as $\zeta (s)$ traverses the discretized $\partial \Lambda_{\epsilon} (A)$,
and finally taking $c_1$ to be $\frac{1}{\pi}$ times the maximum value of this total variation.
To compute a bound on $c_2$, 
let $f$ be  any rational function with $\| f \|_{\Lambda_{\epsilon} (A)} 
\leq 1$, and write $S(f,A) = S_1 + S_2$, where 
\[
S_1 = \int_{\cup_j \Gamma_j} f( \zeta (s) ) ( \mu ( \zeta (s),A ) + \gamma (s) I )\,ds ,~~
S_2 = - \int_{\cup_j \Gamma_j} f( \zeta (s)) \gamma (s) I\,ds .
\]
Taking $\gamma (s)$ to be greater than or equal to $- \lambda_{min} ( \mu ( \zeta (s), A) )$, so that
$\mu ( \zeta (s),A) + \gamma (s) I$ is PSD, we can write
\[
\| S_1 \| \leq  \left\| \int_{\cup_j \Gamma_j} ( \mu ( \zeta (s),A) + \gamma (s) I )\,ds \right\| \leq
2 + \left\| \int_{\cup_j \Gamma_j} \gamma (s) I\,ds \right\| \leq 2 + \int_{\cup_j \Gamma_j} | \gamma (s) |\,ds ,
\]
and similarly,
\[
\| S_2 \| \leq \int_{\cup_j \Gamma_j} | \gamma (s) |\,ds .
\]
In this case, $\| S(f,A) \| \leq 2 + 2 \int_{\cup_j \Gamma_j} | \gamma (s) |\,ds$ and 
therefore
\[
c_2 \leq 1 + \int_{\cup_j \Gamma_j} | \gamma (s) |\,ds .
\]

\section{Relation between $K$ Values from Theorem \ref{thm:main} and from 
(\ref{KCauchy})} \label{relation}

Recall the definition of $\mu ( \zeta (s), A )$ in (\ref{muA_def}), which we also
write as $\mu ( \zeta_0 , \zeta_0' , A)$ if $\zeta (s) = \zeta ( s_0 ) = \zeta_0$
and $\zeta_0' = \left. \frac{d \zeta}{ds} \right|_{s_0}$.  Since the magnitude
of $\zeta_0'$ is $1$, it can be written in the form $e^{i \theta_0}$
for some $\theta_0 \in [0, 2 \pi )$.  Therefore, using definition (\ref{muA_def}), 
we can write
\begin{equation}
\mu ( \zeta_0 , \zeta_0' , A ) = \frac{1}{2 \pi} \left[ e^{i ( \theta_0 - \pi/2)}
( \zeta_0 I - A )^{-1}  +  e^{-i ( \theta_0 - \pi /2)} \left( ( \zeta_0 I - A )^{-1}
\right)^{*} \right] . \label{mu_expression}
\end{equation}
It follows that $\lambda_{min} ( \mu ( \zeta_0 , \zeta_0' , A ) )$ is $\frac{1}{\pi}$
times the minimum point in the spectrum of the Hermitian part of 
$e^{i ( \theta_0 - \pi /2 )} ( \zeta_0 I - A )^{-1}$, which is $\frac{1}{\pi}$ 
times the smallest real part of points in
$\mbox{cl} ( W ( e^{i ( \theta_0 - \pi / 2 )} ( \zeta_0 I - A )^{-1} ))$.
We conclude that 
$| \lambda_{min} ( \mu ( \zeta_0 , \zeta_0', A )) |$ is less than or equal to
$\frac{1}{\pi}$ times
the numerical radius of $e^{i ( \theta_0 - \pi / 2 )} ( \zeta_0 I - A )^{-1}$,
which is the same as $\frac{1}{\pi}$ times the numerical radius of the resolvent 
$( \zeta_0 I - A )^{-1}$.

In some cases, $| \lambda_{min} ( \mu ( \zeta_0 , \zeta_0' , A )) |$ may be 
{\em much} less than $\frac{1}{\pi}$ times the numerical radius of the resolvent; 
e.g., when $\zeta_0$
lies on $\partial W(A)$ so that $\lambda_{min} ( \mu ( \zeta_0 , \zeta_0', A )) = 0$.
In these cases, one can expect a {\em much} smaller value of $K$ in Theorem \ref{thm:main}
than in (\ref{KCauchy}), since the quantity $c_1$ is usually of modest size and
$2 c_2$ will be much less than the value in (\ref{KCauchy}).
If $c_2$ is significantly larger than $c_1$, then the expression for $K$ in 
Theorem \ref{thm:main} is approximately equal to $2 c_2$:
\[
K = c_2 + c_2 \sqrt{1 + \frac{c_1}{c_2^2}} = 2 c_2 + \frac{1}{2} \frac{c_1}{c_2} +
c_2~ O \left( \frac{c_1}{c_2^2} \right)^2 .
\] 

In other cases, $| \lambda_{min} ( \mu ( \zeta_0 , \zeta_0' , A)) |$ may be
approximately equal to $\frac{1}{\pi}$ times the numerical radius of the resolvent 
$( \zeta_0 I - A )^{-1}$.
Since the numerical radius is between $\frac{1}{2}$ and $1$ times the resolvent norm, there may be little difference between the $K$ value in Theorem
\ref{thm:main} and that in (\ref{KCauchy}).  In fact, the value in (\ref{KCauchy})
may actually be smaller because it involves $\frac{1}{2 \pi}$ times the integral
of the resolvent norm, while $c_2$ in Theorem \ref{thm:main} involves 
the integral of $| \lambda_{min} ( \mu ( \zeta (s) , A ) ) |$, 
and $K$ in Theorem \ref{thm:main} is approximately $2 c_2$.
If $| \lambda_{min} ( \mu ( \zeta (s) , A )) | = \frac{1}{\pi} w( ( \zeta (s) I - A )^{-1} )$
and $w( ( \zeta (s) I - A )^{-1} ) = \| ( \zeta (s) I - A )^{-1} \|$, then the $K$
value in Theorem \ref{thm:main} could exceed that in (\ref{KCauchy}) by
a factor of $4$, plus a term involving $c_1$, but this is the most by which
the $K$ value in Theorem \ref{thm:main} can exceed that in (\ref{KCauchy}).

We will see in Section \ref{applications} that in many problems of interest -- problems
in which the matrix $A$ is highly nonnormal and a point $\zeta_0$ on the boundary of $\Omega$ 
comes close to some ill-conditioned eigenvalues of $A$ -- we do, indeed, find that 
$| \lambda_{min} ( \zeta_0 , \zeta_{0}', A ) | \approx \frac{1}{\pi} w( ( \zeta_0 I - A )^{-1} )$, 
and the bound on $K$ in (\ref{KCauchy}) is actually somewhat smaller than that in Theorem \ref{thm:main}.  
We do not yet have a complete explanation of this phenomenon, but here we give an indication of why this 
might be expected.
 
\subsection{When the Numerical Range of the Resolvent is Close to A Disk about a Point
Near the Origin}

First note that if $x$ and $y$ are two unit vectors that are orthogonal to each other, then the
numerical range of the rank one matrix $x y^{*}$ is a disk about the origin
of radius $\frac{1}{2}$.  To see this, consider a unitary similarity transformation
$Q^{*} x y^{*} Q$, where the columns of $Q$ are $[x, y, q_3 , \ldots , q_n ]$.
The matrix $Q^{*} x y^{*} Q$ is the direct sum of a $2$ by $2$ Jordan block
with eigenvalue $0$ and an $n-2$ by $n-2$ block of zeros; the numerical range 
of this matrix is a disk about the origin of radius $\frac{1}{2}$.
Note also that the 2-norm of this matrix is $1$, which is twice the numerical
radius.

If $x$ and $y$ are normalized right and left eigenvectors of $A$ 
corresponding to a simple eigenvalue $\lambda$, (i.e., $x$ and $y$ satisfy 
$A x = \lambda x$ and $y^{*} A = \lambda y^{*}$), the {\em condition number} of $\lambda$ 
is defined as $1/ | y^{*} x |$.  If $\lambda$ is ill-conditioned, then $y$ is almost
orthogonal to $x$.  The following theorem modifies the argument in the previous 
paragraph to deal with the case where $x$ and $y$ are {\em almost} orthogonal 
to each other.

\begin{theorem}
\label{thm:rankone}
Let $x$ and $y$ be unit vectors.  Then the rank one matrix $x y^{*}$
is unitarily similar to the direct sum of a certain $2$ by $2$ matrix
and an $n-2$ by $n-2$ block of zeros.  The $2$ by $2$ matrix is
\begin{equation}
\left[ \begin{array}{cc} \frac{1}{2} ( y^{*} x ) & 1 \\ 0 & \frac{1}{2} ( y^{*} x )
\end{array} \right] + E , \label{pertrankone}
\end{equation}
where the entries of $E$ have magnitude $O( | y^{*} x |^2 )$.
The numerical range of the first matrix in (\ref{pertrankone}) is a disk of radius
$\frac{1}{2}$ about $\frac{1}{2} ( y^{*} x )$, and its norm is $1 + O( | y^{*} x |^2 )$.
\end{theorem}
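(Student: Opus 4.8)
The plan is to perform the unitary reduction in two stages and then read off the numerical range and the norm of the resulting $2\times 2$ block by direct computation. Write $c := y^{*}x$ and $s := \sqrt{1-|c|^{2}}$, and work in the regime $|c|<1$ that is of interest. First I would build an orthonormal basis adapted to $xy^{*}$: take $q_1 = x$ and $q_2 = (y-(x^{*}y)x)/s$, and complete to an orthonormal basis $Q=[q_1,q_2,q_3,\ldots,q_n]$ whose remaining columns span $\{x,y\}^{\perp}$. Since $xy^{*}$ annihilates $\{x,y\}^{\perp}$ and has range $\mathrm{span}(x)$, a short computation of the entries $(q_i^{*}x)(y^{*}q_j)$ shows that $Q^{*}(xy^{*})Q$ is the direct sum of the $2\times 2$ block $B=\left[\begin{smallmatrix} c & s\\ 0 & 0\end{smallmatrix}\right]$ and an $(n-2)\times(n-2)$ zero block. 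This already establishes the claimed block structure, and it remains only to rotate the $2\times 2$ block $B$ into the form $N+E$ with $N=\left[\begin{smallmatrix} c/2 & 1\\ 0 & c/2\end{smallmatrix}\right]$.

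For the second stage I would first equalize the diagonal of $B$. Because the numerical range $W(B)$ is convex (Toeplitz--Hausdorff) and contains both diagonal entries $c$ and $0$, it contains their midpoint $c/2$; hence there is a unit vector $v$ with $v^{*}Bv=c/2$, and completing $v$ to an orthonormal pair forces the other diagonal entry to be $\mathrm{tr}(B)-c/2=c/2$ as well. In this basis $B$ becomes $\left[\begin{smallmatrix} c/2 & \beta\\ \gamma & c/2\end{smallmatrix}\right]$. I would then pin down $\beta,\gamma$ using two unitary invariants: preservation of the determinant gives $c^{2}/4-\beta\gamma=\det B=0$, so $\beta\gamma=c^{2}/4$, while preservation of the Frobenius norm ($\|B\|_F^2=1$) gives $|\beta|^{2}+|\gamma|^{2}=1-|c|^{2}/2$. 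Solving these two relations for $|\beta|^{2},|\gamma|^{2}$ — and, if necessary, swapping $q_1\leftrightarrow q_2$ so that $|\beta|\geq|\gamma|$ — yields $|\beta|=1+O(|c|^{2})$ and $|\gamma|=O(|c|^{2})$. Finally a diagonal phase unitary, which leaves the diagonal fixed, rotates $\beta$ to $|\beta|$, after which $E:=\left[\begin{smallmatrix} 0 & \beta-1\\ \gamma & 0\end{smallmatrix}\right]$ has all entries $O(|c|^{2})$, giving (\ref{pertrankone}).

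The last two assertions are direct $2\times 2$ computations. Writing $N=\tfrac{c}{2}I+J$ with $J=\left[\begin{smallmatrix} 0 & 1\\ 0 & 0\end{smallmatrix}\right]$, the numerical range satisfies $W(N)=\tfrac{c}{2}+W(J)$, and $W(J)=\{\overline{a}b:|a|^{2}+|b|^{2}=1\}$ is exactly the closed disk of radius $\tfrac12$ about the origin; hence $W(N)$ is the disk of radius $\tfrac12$ about $\tfrac12(y^{*}x)$. For the norm I would compute the eigenvalues of $N^{*}N$ explicitly: its trace is $1+|c|^{2}/2$ and its determinant is $|c|^{4}/16$, so its largest eigenvalue is $1+\tfrac12|c|^{2}+O(|c|^{4})$ and $\|N\|=1+O(|c|^{2})$.

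The main obstacle is the sharpness of the error estimate in the second stage: a naive bound gives only $\|B-N\|=O(|c|)$, and the triangle inequality likewise yields only $\|N\|=1+O(|c|)$. Earning the improvement to $O(|c|^{2})$ is the real content, and it comes from two places — equalizing the diagonal, so that all remaining freedom lives in the off-diagonal entries, and then combining the determinant identity $\beta\gamma=c^{2}/4$ with the Frobenius-norm identity to force one off-diagonal entry down to $O(|c|^{2})$ while the other stays within $O(|c|^{2})$ of $1$. Checking that the final phase adjustment does not reintroduce an $O(|c|)$ term, and resolving the labeling ambiguity between $\beta$ and $\gamma$, are the only points that require care.
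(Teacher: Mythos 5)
Your proof is correct, but it takes a genuinely different route from the paper's. The paper writes down an explicit ``symmetrized'' basis in one shot --- $q_1 \propto x - \tfrac{1}{2}(y^{*}x)\,y$ and $q_2 \approx \tilde q_2 \propto y - \tfrac{1}{2}(x^{*}y)\,x$ --- and then simply computes the four entries $(q_i^{*}x)(y^{*}q_j)$ of the $2\times 2$ block, after checking that the Gram--Schmidt correction $q_1^{*}\tilde q_2$ is only $O(|y^{*}x|^{3})$; the whole argument is one direct calculation. You instead reduce in two stages: the obvious Schur-type basis with $q_1=x$ gives the exact block $\left[\begin{smallmatrix} c & s \\ 0 & 0\end{smallmatrix}\right]$ (in your notation $c=y^{*}x$, $s=\sqrt{1-|c|^{2}}$), and a second $2\times 2$ unitary equalizes the diagonal via convexity of the numerical range plus trace invariance, after which the determinant identity $\beta\gamma=c^{2}/4$ and the Frobenius-norm identity $|\beta|^{2}+|\gamma|^{2}=1-|c|^{2}/2$ force $|\beta|=1+O(|c|^{2})$ and $|\gamma|=O(|c|^{2})$; a diagonal phase rotation and a possible swap finish the job. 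Both arguments are sound. Yours is arguably more conceptual: it never needs to guess the right basis, it isolates exactly where the $O(|c|^{2})$ sharpening comes from (the pair of unitary invariants), and it produces a normal form whose diagonal perturbation is exactly zero. The paper's version buys an explicit unitary $Q$ and explicit leading-order expressions for the entries of $E$ (the $(2,1)$ entry is $\approx \tfrac{1}{4}(y^{*}x)^{2}$ and the diagonal corrections are $O(|y^{*}x|^{3})$), which can be useful for finer estimates. You also verify the two closing claims --- $W(N)=\tfrac{c}{2}+W(J)$ with $W(J)$ the closed disk of radius $\tfrac12$, and $\|N\|=1+O(|c|^{2})$ via the trace and determinant of $N^{*}N$ --- which the paper asserts without proof; those computations are correct as well.
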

\begin{proof}
Let 
\begin{eqnarray*}
q_1 & = & \left( x - \frac{1}{2} ( y^{*} x ) y \right) / 
\left\| x - \frac{1}{2} ( y^{*} x) y \right\| , \\
\tilde{q}_2 & = & \left( y - \frac{1}{2} ( x^{*} y ) x \right) / 
\left\| y - \frac{1}{2} ( x^{*} y ) x \right\| , \\
q_2 & = & ( \tilde{q}_2 - ( q_1^{*} \tilde{q}_2 ) q_1 ) / 
\| \tilde{q}_2 - ( q_1^{*} \tilde{q}_2 ) q_1 \| ,
\end{eqnarray*}
and let $q_3 , \ldots , q_n$ be any orthonormal vectors that are orthogonal to
$q_1$ and $q_2$ (and hence to $x$ and $y$).  Note that 
\[
\tilde{q}_2^{*} q_1 = \frac{\frac{1}{4} ( y^{*} x ) | y^{*} x |^2}{1 - \frac{3}{4}
| y^{*} x |^2} ,
\]
so that $q_2^{*} x$ and $y^{*} q_2$ differ from $\tilde{q}_2^{*} x$
and $y^{*} \tilde{q}_2$ by at most terms of order $| y^{*} x |^3$.
Let $Q$ be the unitary matrix with columns $[ q_1 , \ldots , q_n]$.
Then $Q^{*} x y^{*} Q$ is the direct sum of a $2$ by $2$ matrix and an
$n-2$ by $n-2$ block of zeros, where the $2$ by $2$ matrix is
\[
\left[ \begin{array}{c} q_1^{*} \\ q_2^{*} \end{array} \right] x y^{*}
[ q_1 , q_2 ] =
\left[ \begin{array}{cc} ( q_1^{*} x ) ( y^{*} q_1 ) & ( q_1^{*} x ) ( y^{*} q_2 ) \\
( q_2^{*} x ) ( y^{*} q_1 ) & ( q_2^{*} x ) ( y^{*} q_2 ) \end{array} \right] =
\left[ \begin{array}{cc} \frac{1}{2} ( y^{*} x ) & 1 \\
0 & \frac{1}{2} ( y^{*} x ) \end{array} \right] + E ,
\]
where a straightforward calculation shows that each entry of $E$ is of order
$| y^{*} x |^2$.
\end{proof}

Assuming that $| y^{*} x | << 1$, Theorem \ref{thm:rankone} shows that the 
numerical range of the rank one matrix $x y^{*}$ is close to a disk, not about
the origin, but about a point $\frac{1}{2} ( y^{*} x )$ whose absolute value
is much less than the radius of the disk.  Hence each point on the
boundary of the numerical range has absolute value close to the numerical
radius.

Suppose $A$ is diagonalizable with eigenvalues $\lambda_1 , \ldots , \lambda_n$
and normalized right and left eigenvectors $x_1 , \ldots , x_n$ and
$y_1 , \ldots , y_n$.  Then the resolvent $( \zeta I - A )^{-1}$ can be
written in the form:
\[
( \zeta I - A )^{-1} = \sum_{j=1}^n \frac{1}{\zeta - \lambda_j} \frac{x_j y_j^{*}}
{y_j^{*} x_j} .
\]
If $\zeta$ is {\em much} closer to one eigenvalue, say, $\lambda_1$ than it is
to any of the others, then the first term above will be the largest, and
\begin{equation}
( \zeta I - A )^{-1} \approx \frac{1}{\zeta - \lambda_1} \frac{x_1 y_1^{*}}
{y_1^{*} x_1} . \label{x1y1p}
\end{equation}
If $\lambda_1$ is ill-conditioned so that $| y_1^{*} x_1 | << 1$, then
from Theorem \ref{thm:rankone}, the numerical range of $( \zeta I - A )^{-1}$
will be approximately equal to $1/ ( ( \zeta - \lambda ) ( y_1^{*} x_1 ) )$
times a disk of radius $\frac{1}{2}$ about the point $\frac{1}{2} ( y_1^{*} x_1 )$.
Thus each point on the boundary of the numerical range of the resolvent
will have absolute value approximately equal to the numerical radius of the
resolvent. 

In some of the examples of section \ref{applications}, we will encounter points $\zeta$
that are only {\em fairly} close to an ill-conditioned eigenvalue or are fairly close to
several ill-conditioned eigenvalues.  In this case, the approximate equality (\ref{x1y1p})
may not hold because other nearby eigenvalues still have an effect.
The closest (in 2-norm or Frobenius norm) rank one matrix to $( \zeta I-A )^{-1}$ is 
$\sigma_1 u_1 v_1^{*}$, where $\sigma_1$ is the largest {\em singular value} of
$( \zeta I - A )^{-1}$ and 
$u_1$ and $v_1$ are the associated left and right singular vectors, respectively.  
In this case, if $u_1$ and $v_1$ are almost orthogonal to each other, then
Theorem \ref{thm:rankone} shows that if $( \zeta I - A )^{-1} \approx \sigma_1 u_1 v_1^{*}$,
then the numerical range of $( \zeta I - A )^{-1}$ is approximately 
equal to $\sigma_1$ times a disk of radius $\frac{1}{2}$ about $\frac{1}{2} v_1^{*} u_1$.
Again, the radius is much larger than the absolute value of the center, so all
points on the boundary of this disk have absolute value close to the numerical radius.

To see that the right and left singular vectors corresponding to the largest
singular value of $( \zeta I - A )^{-1}$ are almost orthogonal to each other
when $\zeta$ is close to a simple but ill-conditioned eigenvalue $\lambda$ of $A$,
we can use a theorem of G.~W.~Stewart \cite{Stewart1973}.  First note that these
are the left and right singular vectors corresponding to the {\em smallest} singular
value of $\zeta I - A$.  Let us start with the matrix $\lambda I - A$, which
has a null space of dimension one.  The normalized right and left eigenvectors,
$x$ and $y$, corresponding to the eigenvalue $\lambda$ of $A$ satisfy
$( \lambda I - A ) x = 0$ and $( \lambda I - A )^{*} y = 0$.  It follows that
these are right and left singular vectors of $\lambda I - A$ corresponding to
the smallest singular value, $0$.  Write the SVD of $\lambda I - A$ as
$Y \Sigma X^{*}$, where $X = [x, X_2 ]$ and $Y = [y, Y_2]$, and we have put the
smallest singular value first.
Define $E := ( \zeta - \lambda ) I$ so that $( \lambda I - A)+E =
\zeta I - A$.  Define
\[
\gamma := \left\| \left[ \begin{array}{c} Y_2^{*} E x \\ X_2^{*} E^{*} y \end{array}
\right] \right\|_F = \left\| \left[ \begin{array}{c}
( \zeta - \lambda ) Y_2^{*} x \\ ( \bar{\zeta} - \bar{\lambda} ) X_2^{*} y \end{array}
\right] \right\|_F \leq \sqrt{2}~| \zeta - \lambda | ,
\]
\begin{eqnarray*}
\delta & := & \sigma_{n-1} ( \lambda I - A ) - \| y^{*} E x \|_2 - \| Y_2^{*} E X_2 \|_2 \\
 & = & \sigma_{n-1} ( \lambda I - A ) - | \zeta - \lambda | \left( | y^{*} x | +
\| Y_2^{*} X_2 \|_2 \right) \\
 & \geq & \sigma_{n-1} ( \lambda I - A ) - | \zeta - \lambda | ( 1 + | y^{*} x | ) ,
\end{eqnarray*}
where $\sigma_{n-1} ( \lambda I - A )$ is the second smallest singular value of
$\lambda I - A$.
Assuming that $\gamma / \delta < 1/2$, it is shown in \cite[Theorem 6.4]{Stewart1973}
that there are vectors $p$ and $q$ satisfying
\[
\left\| \left[ \begin{array}{c} p \\ q \end{array} \right] \right\|_F < 2 \frac{\gamma}
{\delta}
\]
such that $x + X_2 p$ and $y + Y_2 q$ are (multiples of) right and left singular
vectors of $(\lambda I - A )+E = \zeta I - A$, corresponding to the smallest singular value;
i.e., they are left and right singular vectors of $( \zeta I - A )^{-1}$, corresponding
to the largest singular value.  It follows that if $x$ and $y$ are almost orthogonal
to each other and if $\| p \|_2$ and $\| q \|_2$ are small, then the singular vectors
$u_1$ and $v_1$ corresponding to the largest singular value of $( \zeta I - A )^{-1}$
are almost orthogonal to each other:
\[
\left|
\frac{( x + X_2 p )^{*} ( y + Y_2 q )}{\| x + X_2 p \|_2 \| y + Y_2 q \|_2} \right| =
\frac{| x^{*} y + x^{*} Y_2 q + p^{*} X_2^{*} y + p^{*} X_2^{*} Y_2 q |}
{\| x + X_2 p \|_2 \| y + Y_2 q \|_2} \leq
\frac{| x^{*} y | + \| q \|_2 + \| p \|_2 + \| p \|_2 \| q \|_2}
{\sqrt{( 1 - \| p \|_2^2 ) ( 1 - \| q \|_2^2 )}} .
\]

To get an idea of why $( \zeta I - A )^{-1}$ looks like a rank one matrix when
$\zeta$ is close to an ill-conditioned eigenvalue $\lambda$ of $A$, we will use  
a theorem of M.~Stewart \cite{MStewart2006}.  While typically one expects the singular
values of $\zeta I - A$ to differ from those of $\lambda I - A$ by 
$O( | \zeta - \lambda | )$ (see, for instance, \cite[Theorem 3.3.16]{HJ2}), 
Stewart showed that the smallest singular value changes from $0$ to only
\[
| \zeta - \lambda |~| y^{*} x | + O( | \zeta - \lambda |^2 ) ,
\]
where $y$ and $x$ are the left and right singular vectors of $\lambda I - A$
associated with the zero singular value.  (He also described the $O( | \zeta -
\lambda |^2 )$ terms.)  The second smallest singular value $\sigma_{n-1}$
decreases by at most $| \zeta - \lambda |$ and might increase by this amount,
so as long as $| \zeta - \lambda | << \sigma_{n-1}$, we can expect the ratio
of smallest to second smallest singular value of $\zeta I - A$ (i.e.,
the ratio of second largest to largest singular value of the resolvent
$( \zeta I - A )^{-1}$) to remain small.

\begin{figure}[t]
    \centering
    \includegraphics[height=3in]{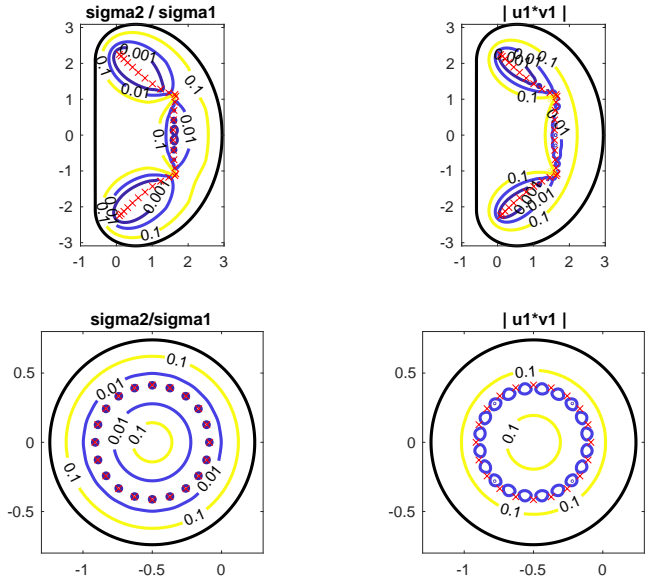}
    \caption{Contour plots of ratios of second largest to largest singular value
of $( \zeta I - A )^{-1}$ and of inner products $| u_1^{*} v_1 |$ of left and right
singular vectors corresponding to the largest singular value of $( \zeta I - A )^{-1}$
for the Grcar matrix of size $n=32$ (top) and the \texttt{transient\_demo} matrix of
size $n=20$ (bottom).  Also shown are the eigenvalues ($x$) and the boundary of the
numerical range (thick black curve).}
    \label{fig:rankone}
\end{figure}

To illustrate this phenomenon, Figure \ref{fig:rankone} shows contour plots of 
the ratios of second largest to largest singular value of $( \zeta I - A )^{-1}$ 
and of the inner products $| u_1^{*} v_1 |$ of the
left and right singular vectors corresponding to the largest singular value
of $( \zeta I  - A )^{-1}$ for two highly nonnormal matrices.  Note the large
areas over which these ratios and inner products are small, implying that the numerical
range of the resolvent is close to a disk about a point much nearer to the origin
than the radius of the disk.  

\begin{figure}[t!]
    \centering
    \includegraphics[height=2in]{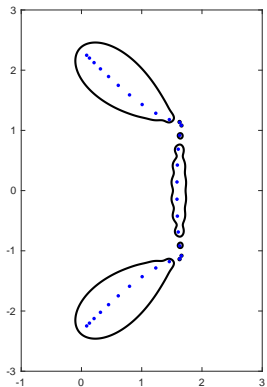}
    \caption{Matrix from MATLAB command \texttt{gallery('grcar',32)}
Eigenvalues (dots) and components of the $10^{-3}$-pseudospectrum (solid curves).
Direct application of Theorem \ref{thm:main} shows that this is a
$4.20 \times 10^3$-spectral set for $A$, but the value of $K$ from (\ref{KCauchy}) is
$2.12 \times 10^3$.}
    \label{fig:pseudo}
\end{figure}

The top plots are for the Grcar matrix of size $n = 32$.
This matrix has $-1$'s on the subdiagonal, $1$'s on the main diagonal and the first 
three super-diagonals, and $0$'s elsewhere.  
It was shown in \cite{CG} (for a Grcar matrix of size $100$)
that the $K$ value obtained from Theorem \ref{thm:main} is much smaller than that from
(\ref{KCauchy}) {\em if} the region $\Omega$ is taken to be 
$W(A) \backslash {\cal D}(0,1/ w( A^{-1} ))$.  Figure \ref{fig:rankone} shows that this
will not be the case if one chooses a smaller region $\Omega$; e.g., the $10^{-3}$ 
pseudospectrum, pictured in Figure \ref{fig:pseudo}.  This region looks similar
to the $0.01$ level curve of $\sigma_2 / \sigma_1$, so at points on the boundary of 
the $10^{-3}$ pseudospectrum, the resolvent $( \zeta I - A )^{-1}$ is close to a 
rank one matrix.
The bottom plots in Figure \ref{fig:rankone} are for the \verb+transient_demo+
matrix of size $20$, available in the eigtool package \cite{eigtool},
which will be used in section \ref{applications}.

\section{Applications} \label{applications}
Throughout this section and the next, we will always assume that the space $H$
in which we are working is Euclidean space and the norm of interest is the 2-norm,
which will be denoted as $\| \cdot \|_2$.
MATLAB codes used to produce the results in this section can be found at:
\verb+http://tygris/k-spectral-sets+.

\subsection{Block Diagonal Matrices} \label{blockd}
If $A$ is a block diagonal matrix, say,
\[
A = \left[ \begin{array}{cc} A_{11} & 0 \\ 0 & A_{22} \end{array} \right] ,
\]
then since
\[
f(A) = \left[ \begin{array}{cc} f( A_{11} ) & 0 \\ 0 & f( A_{22} ) \end{array} \right] ,
\]
it is clear that $\| f(A) \|_2$ can be bounded based on the size of $f$ on 
$W( A_{11} ) \cup W( A_{22} )$.  Yet $W(A)$ is a possibly larger set:  
the convex hull of $W( A_{11} ) \cup W( A_{22} )$.  Of course, if one
knew that $A$ was block diagonal, then one could take advantage of this property,
but the same observation holds when $A$ is unitarily similar to a block diagonal matrix,
and then it is an np-hard problem to identify the blocks \cite{Gu1995}.   Instead, one might
start with $W(A)$ and try to remove one or more disks that would cut the region 
into disjoint pieces corresponding to the blocks of $A$.  

An example is illustrated 
in Figure \ref{fig:blockdiag}.  For this matrix, $A_{11}$ was a real random 
$4$ by $4$ matrix and $A_{22}$ was equal to $8I$ plus a real random
$4$ by $4$ matrix, where the random matrix entries were drawn from a standard normal distribution.
The disk removed was centered at $\xi = 3.5$ and had radius 
$1/w( ( \xi I - A )^{-1} )$.  According to the results of section \ref{example}, 
the remaining region (outlined with a thick black line in the figure)
is a $(3 + 2 \sqrt{3})$-spectral set for $A$.  For comparison, if one evaluates
the resolvent norm integral in (\ref{KCauchy}) over the boundary of this set, one obtains
the slightly larger value of $8.01$.  Also shown in red in the figure are the
numerical ranges of each block.

\begin{figure}[ht]
\centering
\includegraphics[width = 3 in]{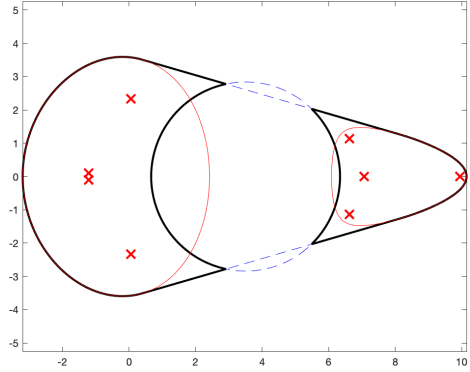}
\caption{Eigenvalues and numerical range of a block diagonal matrix cut into two 
pieces by removing a disk about $\xi = 3.5$ of radius $1/w ( ( A - \xi I )^{-1} )$.  
Resulting region is outlined in black; numerical ranges of the blocks
are shown in red.
}
\label{fig:blockdiag}
\end{figure}

For a matrix with more diagonal blocks, one could remove more disks from $W(A)$
and obtain a $K$-spectral set with three or more disjoint simply connected regions, 
where $K$ is bounded by expression \eqref{mdisks}.
In other cases, a single disk may not be wide enough to split the numerical
range into disjoint pieces.  Then multiple disks could be removed, and $K$ would again
be bounded by expression (\ref{mdisks}).
A better bound might be obtained by using Theorem \ref{thm:main} directly and 
numerically determining bounds on $c_1$ and $c_2$, as described in section \ref{OtherKSpectral}.  
Figures \ref{fig:block1} and \ref{fig:block0} show additional illustrations, 
along with the $K$ value obtained from formula (\ref{mdisks}) and one computed directly from 
Theorem \ref{thm:main}.

\begin{figure}[h!]
    \centering
    \includegraphics[width = 3in]{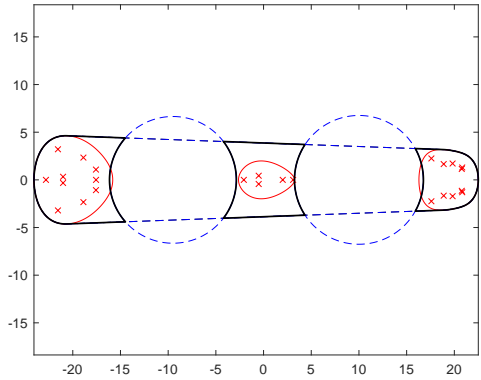}
\caption{$A$ is a block diagonal matrix with three blocks.
Each block is the sum of a multiple of the identity and a real random matrix $R$ 
with entries from a standard normal distribution.  Block $A_{11} = -20I + R_1$ is
$10$ by $10$, block $A_{22} = R_2$ is $5$ by $5$, and 
block $A_{33} = 20I + R_3$ is $10$ by $10$.
The disks removed had radii $1/ \| ( \xi_{1,2} I - A )^{-1} \|_2$, where $\xi_1 = -9.5$
and $\xi_2 = 10$.  Based on formula (\ref{mdisks}),
the remaining region is a $K = 3 + \sqrt{14} \approx 6.74$ spectral set, and using
Theorem \ref{thm:main} directly, as described in section \ref{OtherKSpectral}, we computed
$c_1 \leq 2.60$, $c_2 \leq 1.78$, and $K = 4.19$.  Using formula
\ref{KCauchy}, the value of $K$ was computed to be $11.88$.}
    \label{fig:block1}
\end{figure}
\begin{figure}[h!]
    \centering
    \includegraphics[width = 3 in]{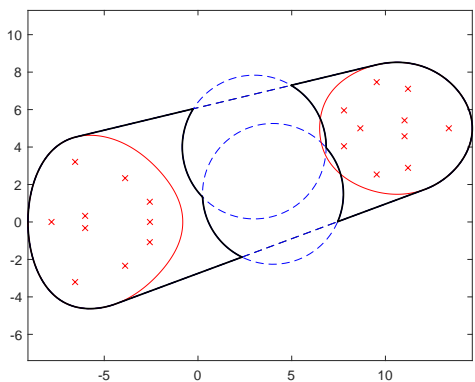}
    \caption{$A$ is a block diagonal matrix with two blocks.
Each block is the sum of a multiple of the identity and a real random matrix $R$ 
with entries from a standard normal distribution.  Block $A_{11} = -5I + R_1$ 
is $10$ by $10$, and block $A_{22} = (10+5i)I + R_2$ is $10$ by $10$.  Two disks of radius
$1/ \| ( \xi_{1,2} I - A )^{-1} \|_2$, where $\xi_1 = 4 + 1.5i$ and $\xi_2 = 3+4i$,
were needed to split the numerical range of $A$
into two disjoint sets.  Based on formula (\ref{mdisks}), the remaining
region is a $3 + \sqrt{14} \approx 6.74$ spectral set, and using Theorem
\ref{thm:main} directly, as described in section \ref{OtherKSpectral}, we
computed $c_1 \leq 3.20$, $c_2 \leq 1.73$, and $K = 4.21$.  Using formula
\ref{KCauchy}, the value of $K$ was computed to be $7.94$.}
    \label{fig:block0}
\end{figure}

\newpage
\subsection{Bounding Solutions to the Initial Value Problem} \label{ds}

The results from section \ref{OtherKSpectral} can be used to bound the solutions 
to both continuous and discrete time dynamical systems, assuming that the 
spectrum of $A$ lies in the left half-plane or the unit disk, respectively, 
by determining a $K$ value for the set $\Omega$ equal to the intersection of $W(A)$ 
with the left half-plane or the unit disk.

In this case, since $\Omega$ is simply connected, one {\em may} be able to find
the {\em optimal} $K$ value numerically.  If $A$ is an $n$ by $n$ matrix,
then the form of the function $f$ with $\| f \|_{\Omega} = 1$ that maximizes 
$\| f(A) \|$ is known; it is of the form $B \circ \varphi$, where $\varphi$ is any conformal 
mapping from $\Omega$ to the unit disk and $B$ is a finite Blaschke product of degree 
at most $n-1$:
\[
B (z) = \prod_{j=1}^{n-1} \frac{z - \alpha_j}{1 - \bar{\alpha}_j z} ,~~
| \alpha_j | \leq 1 .
\]
We use the Kerzmann-Stein procedure \cite{KS,KT} as implemented in \verb+chebfun+ \cite{chebfun}
to conformally map $\Omega$ to the unit disk.  We then try many different initial guesses
for the roots $\alpha_j$ of $B$ and use the optimization code \verb+fmincon+ in MATLAB
to search for roots that maximize $\| B( \varphi (A)) \|_2$.  We can check a number of 
conditions
that are known to hold for the optimal Blaschke product $B$ to give us some confidence
that we  have indeed found the global maximum.  See \cite{BGGRSW} for details.  Still,
these conditions are not sufficient to guarantee a global maximum, but at least
the maximum value of $\| B( \varphi (A) ) \|_2$ returned by the optimization code is a 
{\em lower bound} on the optimal $K$ value for the region $\Omega$.

As an example, the left plot in Figure \ref{fig:tuesday} shows 
the behavior of $\| e^{tA} \|_2$ for a matrix $A$ from \cite{NC} that
models the ecosystem of Tuesday Lake in Wisconsin after the introduction of 
pisciverous largemouth bass.  The plot shows initial growth
and then decay of the relative total population of the Tuesday Lake ecosystem.  The right plot
in the figure shows the eigenvalues and numerical range of the matrix
and the part of the numerical range in the left half-plane.  In this
case we found, by integrating $| \lambda_{min} ( \mu ( \zeta (s),A ) |$
along the segment of the imaginary axis inside $W(A)$ and using
Theorem \ref{thm:main}, that $K$ could be bounded by $2.66$, while
formula (\ref{KCauchy}) gave the slightly larger value $K = 3.72$.
Based on results from our optimization code, we believe that the optimal value
of $K$ for this region is $1.95$, and, as noted earlier, this is at least 
a lower bound on $K$.  In this case the different bounds on $K$ are all very close
and somewhat larger than the maximum value of $\| e^{tA} \|_2$, $t > 0$,
found in Figure \ref{fig:tuesday}.

\begin{figure}[t!]
\centerline{\epsfig{file=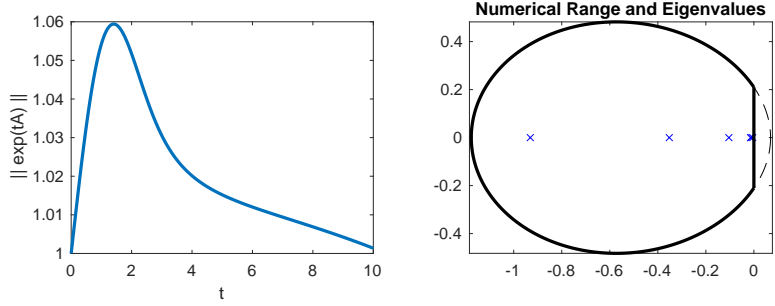,width=4in}}
\caption{Matrix modeling the ecosystem in Tuesday Lake after introducing
piscivores \cite{NC}.  Left plot shows $\| e^{tA} \|_2$ growing before 
decaying; right plot shows $W(A)$ extending into the right half-plane 
(dashed curve) and eigenvalues ($x$'s) in the left half-plane.} 
\label{fig:tuesday}
\end{figure}

\begin{figure}[t!]
\centerline{\epsfig{file=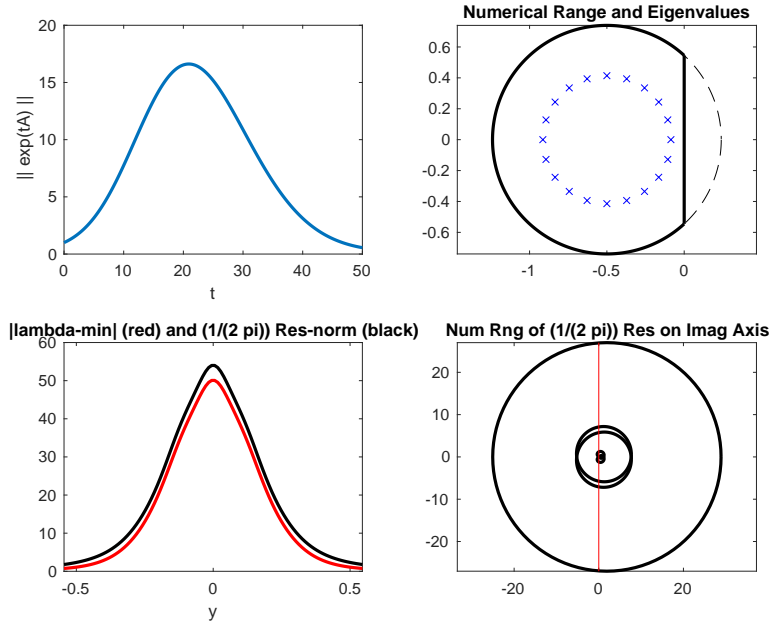,width=4in}}
\caption{Matrix from the eigtool command \texttt{transient\_demo(20)} 
\cite{eigtool}. Upper left shows $\| e^{tA} \|_2$ growing before decaying; 
upper right shows $W(A)$ extending into the right half-plane (dashed curve) 
and eigenvalues of $A$ (x's) in the left half-plane. 
Lower left shows $| \lambda_{\min}(\mu(\zeta),A)) |$ 
and $\frac{1}{2 \pi} \| ( \zeta I - A )^{-1} \|_2$ for $\zeta$ 
on the segment of the imaginary axis forming the right boundary of 
$\Omega$.  Lower right shows numerical ranges of several of the matrices
$\frac{1}{2 \pi} ( \zeta I - A )^{-1}$ for $\zeta$ on this segment of the
imaginary axis; for the larger numerical ranges, the absolute value of the
minimal real part, which is 
$\frac{1}{2} | \lambda_{min}( \mu ( \zeta , A ) ) |$, is almost as large
as the numerical radius, explaining why $| \lambda_{min} ( \mu ( \zeta , A ) ) |$
is of the same order of magnitude as $\frac{1}{2 \pi} \|(\zeta I-A)^{-1}\|_2$.} 
\label{fig:transient20}
\end{figure}

As another example, we consider the matrix 
\texttt{transient\_demo(20)} available in the eigtool
package \cite{eigtool}.  The upper left plot in Figure \ref{fig:transient20}
shows the behavior of $\| e^{tA} \|_2$, $t > 0$, which grows to about
$16.61$ before starting to decrease.  The upper right plot shows the 
eigenvalues, in the left half-plane, and the numerical range, extending
into the right half-plane, together with the region $\Omega$ consisting
of the part of $W(A)$ in the left half-plane.   
Integrating $| \lambda_{min} ( \mu ( \zeta (s),A )) |$ along the segment
of the imaginary axis forming the right boundary of $\Omega$ and
using Theorem \ref{thm:main}, we determined that 
$K = c_2 + \sqrt{ c_2^2 + c_1 } \approx 2 c_2 = 40.13$.  In this case,
formula (\ref{KCauchy}) gave a smaller value, $K = 27.95$.  The
reason for this smaller value can be seen in the lower plots of 
Figure \ref{fig:transient20}.  The large values of 
$| \lambda_{min} ( \mu ( \zeta (s),A ) ) |$ and of 
$ \frac{1}{2 \pi} \| ( \zeta I - A )^{-1} \|_2$
occur on the segment of the imaginary axis, and, while
$| \lambda_{min} ( \mu ( \zeta (s),A ) |$ is always less than 
or equal to $\frac{1}{2 \pi} \| ( \zeta I - A )^{-1} \|_2$, the difference
is small.  Since the value of $K$ from Theorem \ref{thm:main}
is approximately equal to $2 c_2$, which is approximately twice
the integral of $| \lambda_{min} ( \mu ( \zeta (s),A ) ) |$ over this 
segment, and the value of $K$ from (\ref{KCauchy}) is the integral 
of $\frac{1}{2 \pi} \| ( \zeta I - A )^{-1} \|_2$ over
this segment (and over the remainder of $\partial \Omega$, where 
$\| ( \zeta I - A )^{-1} \|_2$ is much smaller), the result is a 
smaller value of $K$ from formula (\ref{KCauchy}). 
The lower right plot shows why $| \lambda_{min} ( \mu ( \zeta ,A )) |$ might
be almost as large as $\frac{1}{2 \pi} \| ( \zeta I - A )^{-1} \|_2$.  It shows
the numerical ranges of several of the matrices $\frac{\zeta'}{2 \pi i} 
( \zeta I - A )^{-1} = \frac{1}{2 \pi} ( \zeta I - A )^{-1}$
for $\zeta$ on this segment of the imaginary axis.  While the smaller numerical
ranges lie mostly in the right half-plane, for the larger ones, the absolute value of the
real part of the leftmost point in these numerical ranges (which is $\frac{1}{2} | \lambda_{min} (
\mu ( \zeta , A ) ) |$) is almost as large as the numerical radius.
We will later see why this might be expected when $\zeta$ is close to
an ill-conditioned eigenvalue.
In this example, our optimization code found a function $B \circ \varphi$
for which $\| B( \varphi (A) ) \|_2 = 21.54$, and we believe that this
is the optimal value of $K$ for this set $\Omega$.

\begin{figure}[t]
    \centering
    \includegraphics[width=4in]{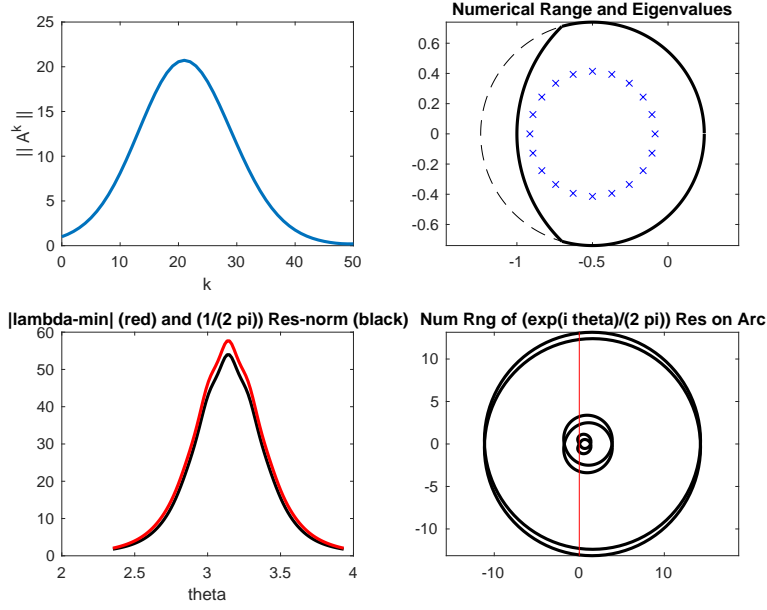}
    \caption{Matrix from the eigtool command 
\texttt{transient\_demo(20)} \cite{eigtool}. Upper left shows $\|A^k\|_2$ 
growing before decaying; upper right shows $W(A)$ extending beyond 
$\mathcal D(0,1)$ (dashed curve) and eigenvalues of $A$ (x's) in the unit disk. 
Lower left shows $| \lambda_{\min}(\mu(\zeta ,A)) |$ and
$\frac{1}{2 \pi} \| ( \zeta I - A )^{-1} \|_2$ for $\zeta$ on the arc
of the unit circle inside $W(A)$. 
Lower right shows numerical ranges of several of the matrices
$\frac{e^{i \theta}}{2 \pi} ( \zeta I - A )^{-1}$ for $\zeta = e^{i \theta}$ on this arc of the
unit circle; for the larger numerical ranges, the absolute value of the
minimal real part, which is $\frac{1}{2} | \lambda_{min}( \mu ( \zeta , A ) ) |$, 
is almost as large as the numerical radius, explaining why 
$| \lambda_{min} ( \mu ( \zeta , A ) ) |$
is of the same order of magnitude as $\frac{1}{2 \pi} \|(\zeta I-A)^{-1}\|_2$.} 
    \label{fig:transientpowers}
\end{figure}

Using the same matrix, \texttt{transient\_demo(20)}, 
we computed norms of powers of $A$ and found that they grew to about
$20.72$ before starting to decrease, as shown in the upper left plot
of Figure \ref{fig:transientpowers}.
The upper right plot shows the numerical range 
of the matrix, which extends beyond $\mathcal D(0,1)$, and the 
eigenvalues which all lie within $\mathcal D(0,1)$. 
If we take $\Omega$ to be $W(A) \cap \mathcal D(0,1)$, 
whose boundary is the wide solid line in the upper-right plot, 
then we can use Theorem \ref{thm:main} to calculate a value of $K$ 
for which $\Omega$ is a $K$-spectral set.
Integrating $| \lambda_{min} ( \mu ( \zeta (s),A )) |$ along the arc
of the unit circle inside $W(A)$, we determined that 
$K = c_2 + \sqrt{ c_2^2 + c_1 } = 70.44$.  Again in this case,
formula (\ref{KCauchy}) gave a smaller value, $K = 36.03$.  The
reason can be seen in the lower plots of 
Figure \ref{fig:transientpowers}.  The large values of 
$| \lambda_{min} ( \mu ( \zeta ,A ) ) |$ and of 
$ \frac{1}{2 \pi} \| ( \zeta I - A )^{-1} \|_2$
occur on the arc of the unit circle inside $W(A)$, as shown in the 
lower left plot.  In this case,
$| \lambda_{min} ( \mu ( \zeta (s),A ) |$ is greater than 
$\frac{1}{2 \pi} \| ( \zeta I - A )^{-1} \|_2$.
The lower right plot shows why $| \lambda_{min} ( \mu ( \zeta ,A )) |$ might
be larger than $\frac{1}{2 \pi} \| ( \zeta I - A )^{-1} \|_2$.  It shows
the numerical ranges of several of the matrices $\frac{\zeta'}{2 \pi i}
( \zeta I - A )^{-1} = \frac{e^{i \theta}}{2 \pi} ( \zeta I - A )^{-1}$
for $\zeta = e^{i \theta}$ on this arc of the unit circle.  For the larger numerical ranges,
the absolute value of the real part of the
leftmost point in these numerical ranges (which is $\frac{1}{2} | \lambda_{min} (
\mu ( \zeta , A ) ) |$) is almost as large as the numerical radius.
Again, we will give a partial explanation for this in the last section.
In this example, our optimization code found a function $B \circ \varphi$
for which $\| B( \varphi (A) ) \|_2 = 21.06$, and we believe that this
is the optimal value of $K$ for this set $\Omega$.

\section{Summary and Concluding Remarks} \label{comparisons}
The examples of the previous section show that for certain sets $\Omega$,
Theorem \ref{thm:main} provides smaller $K$ values than (\ref{KCauchy}),
but for other sets $\Omega$, this is not the case.  Figures
\ref{fig:transient20} and \ref{fig:transientpowers} show that when the 
$K$ value from (\ref{KCauchy}) is smaller, it is because, for points $\zeta$
on $\partial \Omega$ where $\frac{1}{2 \pi} \| ( \zeta I - A )^{-1} \|_2$ is large, 
the quantity $| \lambda_{min} ( \mu ( \zeta , A ) ) |$ is
about the same size.  This is because $| \lambda_{min} ( \mu ( \zeta , A ) ) |$
is the absolute value of a particular point on the boundary of the numerical
range of $( \zeta I - A )^{-1}$, and the numerical range of $( \zeta I - A )^{-1}$
looks almost like a disk about the origin, or about a point near the origin. 
Thus $| \lambda_{min} ( \mu ( \zeta , A )) |$
is approximately equal to the numerical radius of $( \zeta I - A )^{-1}$, which
is within a factor of $2$ of the norm of $( \zeta I - A )^{-1}$.

In section \ref{relation}, we gave an explanation as to why this might be expected.
In areas near ill-conditioned eigenvalues, the resolvent looks like the rank
one matrix $\sigma_1 u_1 v_1^{*}$, where $\sigma_1$ is the largest singular value
of the resolvent and $u_1$ and $v_1$ are the corresponding left and right
singular vectors.  Additionally, $u_1$ and $v_1$ are almost orthogonal to each
other.  While Theorem \ref{thm:rankone} and the references thereafter
about perturbation of singular values and singular vectors give some insight
into where in the complex plane this phenomenon occurs, a more quantitative
analysis would be an interesting next step.  
Regions that come close to ill-conditioned eigenvalues are often the
most interesting ones for applications.

\end{document}